\documentclass[11pt,reqno,tbtags]{amsart}
\pdfoutput=1
\usepackage[]{amsmath,amssymb,amsfonts,latexsym,amsthm,enumerate}
\usepackage{amssymb}
\usepackage{enumerate,graphicx,subfigure,paralist}
\usepackage[numeric,initials,nobysame]{amsrefs}

\numberwithin{equation}{section}

\newtheorem{maintheorem}{Theorem}

\newtheorem{theorem}{Theorem}[section]
\newtheorem*{theorem*}{Theorem}

\newtheorem*{conjecture*}{Conjecture}
\newtheorem{lemma}[theorem]{Lemma}
\newtheorem{claim}[theorem]{Claim}

\newtheorem{corollary}[theorem]{Corollary}

\theoremstyle{definition}{

\newtheorem*{definition*}{Definition}

}

\theoremstyle{remark}{

\newtheorem*{remark*}{Remark}

}


\newcommand{\R}{\mathbb R}

\newcommand{\deq}{\stackrel{\scriptscriptstyle\triangle}{=}}

\newcommand{\E}{\mathbb{E}}
\renewcommand{\P}{\mathbb{P}}
\DeclareMathOperator{\var}{Var}

\newcommand{\gap}{\text{\tt{gap}}}
\newcommand{\tmix}{t_\textsc{mix}}

\renewcommand{\epsilon}{\varepsilon}
\renewcommand{\phi}{\varphi}


\newcommand{\cP}{\mathcal{P}}
\newcommand{\cL}{\mathcal{L}}

\newcommand{\cT}{\mathcal{T}}

\DeclareMathOperator{\dist}{dist}
\DeclareMathOperator{\diam}{diam}
\DeclareMathOperator{\vol}{vol}
\newcommand{\SRW}{\textsf{SRW}}

\newcommand{\iso}{\operatorname{ch}}
\date{}

\begin{document}
\title{Explicit expanders with cutoff phenomena}

\author{Eyal Lubetzky}
\address{Eyal Lubetzky\hfill\break
Microsoft Research\\
One Microsoft Way\\
Redmond, WA 98052, USA.}
\email{eyal@microsoft.com}
\urladdr{}

\author{Allan Sly}
\address{Allan Sly\hfill\break
Microsoft Research\\
One Microsoft Way\\
Redmond, WA 98052, USA.}
\email{allansly@microsoft.com}
\urladdr{}

\begin{abstract}
The cutoff phenomenon describes a sharp transition in the convergence of an ergodic finite Markov chain to equilibrium.
Of particular interest is understanding this convergence for the simple random walk on a bounded-degree expander graph.
The first example of a family of bounded-degree graphs where the random walk exhibits cutoff in total-variation was provided only very recently, when the authors showed this for a typical random regular graph.
However, no example was known for an explicit (deterministic) family of expanders with this phenomenon.
Here we construct a family of cubic expanders where the random walk from a worst case initial position exhibits total-variation cutoff. Variants of this construction give cubic expanders without cutoff, as well as cubic graphs
with cutoff at any prescribed time-point.
\end{abstract}

\maketitle

\vspace{-1cm}

\section{Introduction}\label{sec:intro}

A finite ergodic Markov chain is said to exhibit \emph{cutoff} in total-variation if its $L^1$-distance from the stationary distribution drops abruptly from near its maximum to near $0$. In other words, one should run the Markov chain until the cutoff point for it to even slightly mix in $L^1$ whereas running it any further is essentially redundant.

Let $(X_t)$ be an aperiodic irreducible discrete-time Markov chain on a finite state space $\Omega$ with stationary distribution $\pi$. The worst-case total-variation distance to stationarity at time $t$ is defined as
\[ d(t) \deq \max_{x \in \Omega} \| \P_x(X_t \in \cdot)- \pi\|_\mathrm{TV}\,,\]
where $\P_x$ denotes the probability given $X_0=x$ and where $\|\mu-\nu\|_\mathrm{TV}$, the \emph{total-variation distance} of two distributions $\mu,\nu$ on $\Omega$, is given by
\[\|\mu-\nu\|_\mathrm{TV} \deq \sup_{A \subset\Omega} \left|\mu(A) - \nu(A)\right| = \frac{1}{2}\sum_{x\in\Omega} |\mu(x)-\nu(x)|\,.\]
Define $\tmix(\epsilon)$, the total-variation \emph{mixing-time} of $(X_t)$ for $0 < \epsilon < 1$, to be
\[ \tmix(\epsilon) \deq \min\left\{t : d(t) < \epsilon \right\}\,.\]
Let $\big(X_t^{(n)}\big)$ be a family of such chains, each with its total-variation distance from stationarity $d_n(t)$, its mixing-time $\tmix^{(n)}$, etc. This family exhibits \emph{cutoff} iff the following sharp transition in its convergence to equilibrium occurs:
\begin{equation}\label{eq-cutoff-def}\lim_{n\to\infty} \tmix^{(n)}(\epsilon) \big/ \tmix^{(n)}(1-\epsilon)=1 \quad\mbox{ for any $0 < \epsilon < 1$}\,.\end{equation}
The rate of convergence in~\eqref{eq-cutoff-def} is addressed by the following notion of a \emph{cutoff window}: For two sequences $t_n,w_n$ with $w_n=o(t_n)$ we say that $\big(X_t^{(n)}\big)$ has cutoff at time $t_n$ with window $w_n$ if and only if
\[\tmix^{(n)}(s) = \left(1+O(w_n)\right)t_n = (1+o(1))t_n~\mbox{ for any fixed $0 < s < 1$}\,,\]
or equivalently, cutoff at time $t_n$ with window $w_n$ occurs if and only if
\[\left\{\begin{array}
  {l}\lim_{\lambda\to\infty} \liminf_{n\to\infty} \;d_n(t_n - \lambda w_n) = 1\,,\\
  \lim_{\lambda \to \infty} \limsup_{n\to\infty} d_n(t_n + \lambda w_n) = 0\,.
\end{array}\right.\]

The cutoff phenomenon was first identified for random transpositions on the symmetric group in~\cite{DS} and for random walks on the hypercube in~\cite{Aldous}.
The term ``cutoff''
was coined by Aldous and Diaconis in~\cite{AD}, where cutoff was shown for the top-in-at-random card shuffling process.
While believed to be widespread, there are relatively few examples where the cutoff phenomenon has been
rigorously confirmed. Even for fairly simple chains, determining whether there is cutoff often requires
the full understanding of their delicate behavior around the mixing threshold.
See~\cites{Diaconis,CS,SaloffCoste} and the references therein for more on the cutoff phenomenon.

A specific Markov chain which found numerous applications in a wide range of areas in mathematics over the last quarter of a century is the simple random walk (\SRW) on a bounded-degree \emph{expander} graph. A finite graph is called an expander
if every small subset of the vertices has a relatively large edge boundary. Formally, the Cheeger constant of a $d$-regular graph $G$ on $n$ vertices (also referred to as the edge isoperimetric constant) is defined as
\[ \iso(G) = \min_{\emptyset \neq S \subsetneqq V(G)} \frac{|\partial S|}{|S|\;\wedge\; |V(G)\setminus S|}\,,\]
where $\partial S$ is the set of edges with exactly one endpoint in $S$. We say that $G$ is a $c$-edge-expander for some fixed $c > 0$ if it satisfies $\iso(G) > c$. The well-known discrete analogue of Cheeger's inequality~\cites{Alon,AM,Dodziuk,JS} implies that the spectral-gap of the \SRW\ on a family of $c$-edge-expander graphs on $n$ vertices is uniformly bounded away from $0$, hence these chains rapidly converge to equilibrium within $O(\log n)$ steps. See the survey~\cite{HLW} for more on the applications of random walks on expanders.

In 2004, Peres\cite{Peres} observed that for any family of reversible Markov chains, total-variation cutoff can only occur if the inverse spectral-gap has smaller order than the mixing time. Note that this condition clearly holds for the simple random walk on an $n$-vertex expander, where the inverse-gap is $O(1)$ whereas $\tmix \asymp \log n$. It was shown by Chen and Saloff-Coste \cite{CS} that when measuring convergence in $L^p$-distance for $p>1$ this criterion does ensure cutoff, however the case $p=1$ (cutoff in total-variation)
has proved to be significantly more complicated. There are known examples where the above condition does not imply cutoff (see \cite{CS}*{Section 6}), yet it was conjectured by Peres to be sufficient in many natural families of chains (e.g.~\cite{DLP} confirming this for birth-and-death chains). In particular, this was conjectured for the lazy random walk on bounded-degree transitive graphs.

The first example of a family of bounded-degree graphs where the random walk exhibits cutoff in total-variation was provided only very recently \cite{LS}, when the authors showed this for a typical random regular graph.
It is well known that for any fixed $d \geq 3$, a random $d$-regular graph is with high probability (w.h.p.) a very good expander, hence the simple random walk on almost every $d$-regular expander exhibits worst-case total-variation cutoff.
However, to this date there were no known examples for an \emph{explicit} (deterministic) family of expanders with this phenomenon.

In Section~\ref{sec-explicit} we provide what is, to the best of our knowledge, the first explicit construction of a
family of bounded-degree expanders where the simple random walk has worst-case total-variation cutoff.
\begin{maintheorem}\label{thm-1}
There is an explicit family of $3$-regular expanders on which the \emph{\SRW}\ from a worst case initial position exhibits total-variation cutoff.
\end{maintheorem}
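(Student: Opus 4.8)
The plan is to build the expander by "stretching'' the edges of a known good expander base graph so that the random walk's mixing is governed by an essentially one-dimensional diffusion along a long path, for which cutoff is well understood. Concretely, start with any explicit family of $3$-regular Ramanujan graphs $H_n$ (e.g.\ the Lubotzky--Phillips--Sarnak or Margulis graphs) on $m$ vertices, and replace each edge of $H_n$ by a path of length $\ell = \ell(m)$, where $\ell$ is chosen to grow slowly (say $\ell \asymp \log m$) so that the total size $n \asymp m \log m$ and the diameter stays $\Theta(\log n)$. To keep the graph $3$-regular we actually subdivide in a way that respects degrees: replace each original vertex by a small constant-size gadget and connect gadgets by long internally-$2$-regular paths with a single degree-$1$ "hair'' or a pendant triangle attached at the path's midpoint so that every vertex has degree exactly $3$ and the walk still wants to traverse the path. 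The key point is to tune the construction so that, from a worst-case start (a vertex near the middle of one of the long paths), the walk must first escape its path — an excursion on a segment of length $\Theta(\log n)$ — and this escape time dominates and concentrates.

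The first step is to verify expansion: since contracting the long paths recovers $H_n$, and $H_n$ is a $c_0$-edge-expander, a standard argument shows the subdivided graph is a $c$-edge-expander with $c \asymp c_0/\ell \asymp 1/\log n$. That alone is not a uniform constant, so instead I would use a \emph{bounded}-length subdivision or, better, attach expander gadgets along the paths: the honest route to a genuine $\Omega(1)$-expander is to take the \emph{tensor/replacement} of $H_n$ with a fixed small expander along each edge, guaranteeing $\iso(G_n) > c$ for an absolute $c>0$ independent of $n$, while still creating the long "bottleneck regions'' needed to produce cutoff. The second step is to pin down the mixing time. Here I would use the comparison between the full walk and its projection. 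Because the base graph $H_n$ mixes in $O(\log m) = O(\log n)$ steps once the walk "sees'' it as a quotient, the bottleneck to mixing is the escape/equilibration along the long path components; this is a birth-and-death-type chain on $\Theta(\log n)$ states, for which \cite{DLP} gives cutoff, with window $w_n = o(\log n)$. I would show $d_n(t)$ is sandwiched between the total-variation distance of this one-dimensional chain (lower bound, via a distinguishing statistic: the position along the path the walk started on) and a coupling-based upper bound (once the walk has equilibrated on its starting path and completed $O(\log n)$ steps of the contracted expander walk, couple two copies).

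The main obstacle will be the \emph{lower bound}: establishing that from the worst-case start the distance stays near $1$ until time $(1-o(1)) \tmix$. Upper bounds follow from Cheeger's inequality plus a coupling, but the lower bound requires a distinguishing statistic that remains informative for almost the full mixing time. The natural candidate is a signed/weighted sum over a localized region near the starting vertex — essentially the discrete analogue of an eigenfunction supported on the starting path — and I would need to control its mean and variance under $\P_x$ versus under $\pi$, showing the two are separated by many standard deviations up to time $(1-o(1))\tmix$. Making this precise amounts to showing the relevant eigenvalue is $1 - \Theta(1/\log n)$ (so that the associated eigenfunction decays on exactly the mixing timescale) while all competing contributions from the expander part decay in $O(\log n / \log\log n)$ steps and thus do not wash out the signal. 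Once this eigenfunction-based lower bound and the coupling upper bound meet at the same $t_n = (1+o(1))\tmix$ with a sub-leading-order window, cutoff at $t_n$ follows, proving Theorem~\ref{thm-1}. For the variants (no cutoff; cutoff at a prescribed time), one simply adjusts $\ell$ and the number/length of the long path components, or superposes several such constructions at different scales to destroy the sharp transition.
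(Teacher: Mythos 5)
There is a genuine gap, and it sits at the heart of your mechanism for cutoff. You want the mixing to be dominated by the escape of the walk from a long stretched path of length $\ell\asymp\log n$, and you assert that this escape time ``dominates and concentrates.'' It does not: the exit time of an \emph{unbiased} simple random walk from the middle of a segment of length $\ell$ has mean of order $\ell^2$ and fluctuations of the \emph{same} order, so the passage time through such a bottleneck never concentrates; correspondingly, a birth-and-death chain on an unbiased segment has $\trel\asymp\tmix$, so the result of \cite{DLP} gives \emph{no} cutoff there (this is exactly the phenomenon behind the second half of Theorem~\ref{thm-2}: when the diffusive timescale is dominant, cutoff is impossible). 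Your fallback lower-bound strategy is also internally inconsistent with expansion: an eigenvalue $1-\Theta(1/\log n)$ means the spectral gap tends to $0$, so such a graph cannot be a $c$-edge-expander family in the first place. More generally there is an unresolved tension you acknowledge but do not fix: to keep $\iso(G_n)$ bounded below you must take the stretch length bounded (the paper indeed loses a factor $L$ in the Cheeger constant and therefore keeps $L=O(1)$), but a bounded stretch contributes only $O(1)$ per edge and cannot by itself create the sharp transition. Finally, your upper bound ``once the walk sees the quotient $H_n$ it mixes in $O(\log m)$ steps'' hides a circularity: to get cutoff for the stretched graph by summing passage times you need the \emph{number of base-graph steps until mixing} to concentrate, i.e.\ you need TV cutoff (from a worst start) for the explicit base expander itself --- which is precisely the statement of Theorem~\ref{thm-1} that was open; the paper only uses this ``stretch a graph that already has cutoff'' idea later, in the proof of Theorem~\ref{thm-2}, with the Theorem~\ref{thm-1} graph as the base.

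The paper's construction gets concentration from a different source: drift rather than diffusion. The graph is a (stretched) regular tree of depth $\Theta(h)$, so the level of the walk is a one-dimensional random walk with constant positive speed toward the leaves; since $1-o(1)$ of the stationary mass lies in the bottom $O(1)$ levels, the walk cannot be mixed before its hitting time $\tau_\ell$ to the bottom, and this hitting time concentrates by the law of large numbers/CLT --- that is the lower bound, with no spectral argument at all. Expansion and the matching upper bound are handled by two embedded explicit expanders: $H_1$ is crossed with the stretched tree in the middle levels so that during the descent the walk performs $\gtrsim L^2h$ steps of a walk on $H_1$ and its horizontal coordinate equilibrates (the fixed stretch $L$ is tuned against the spectral gaps of $H_1,H_2$ in \eqref{eq-L-choice}); $H_2$ joins the leaves, making the bottom part, and hence all of $G$, a uniform expander and forcing every starting point below the top levels to mix strictly faster, so the worst case is near the root. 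Three-regularity is preserved not by pendant gadgets (your ``hair''/pendant-triangle device does not even give degree $3$ everywhere) but by replacing $H_1,H_2$ with their line graphs and using auxiliary vertices. If you want to salvage your approach, you would need to replace the unbiased stretched path by a structure whose traversal time is ballistic and concentrated, and to decouple the concentration argument from any spectral estimate --- at which point you are essentially rebuilding the paper's tree-plus-two-expanders architecture.
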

The construction mimics the behavior of the \SRW\ on random regular graphs, whose mixing was shown in \cite{LS} (as conjectured by Durrett~\cite{Durrett} and Berestycki~\cite{Berestycki}) to resemble that of a walk started at a root of a $d$-regular tree. Two smaller expanders that are embedded into the graph structure
allow careful control over the mixing time from all possible initial positions.

A straightforward modification of the above construction yields an explicit family of cubic expanders where the \SRW\ from a worst-case initial position does \emph{not} exhibit cutoff in total-variation (despite Peres' cutoff criterion).
Note that Peres and Wilson \cite{PW} had already sketched an example for a family of expanders without total-variation cutoff.
We describe our simple construction achieving this in Section~\ref{sec-nocutoff} for completeness.

A final variant of the construction, presented in Section~\ref{sec-prescribed}, provides cubic graphs with cutoff occurring at essentially any prescribed order of location.
Namely, there is an explicit family of cubic graphs where the \SRW\ has cutoff at any specified order between $[\log n, n^2)$ whereas for $\tmix \asymp n^2$ there cannot be cutoff (it is well-known that
on any family of bounded-degree graphs on $n$ vertices the \SRW\ has $c\log n \leq \tmix \leq c' n^2$ for some fixed $c,c'>0$).
\begin{maintheorem}
  \label{thm-2}
Let $t_n$ be a monotone sequence with $t_n \geq \log n$ and $t_n = o(n^2)$. There is an explicit family $(G_n)$ of $3$-regular graphs with $|G_n|\asymp n$ vertices where the \emph{\SRW}\ from a worst-case initial position exhibits total-variation cutoff at $\tmix \asymp t_n$.

Furthermore, for any family of bounded-degree $n$-vertex graphs where the \emph{\SRW} has $\tmix \asymp n^2$ (largest possible order of mixing) there cannot be cutoff.
\end{maintheorem}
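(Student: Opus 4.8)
The plan is to start from the explicit cubic family $H_m$ of Theorem~\ref{thm-1} — on which the \SRW\ from a worst‑case start has total‑variation cutoff at $\tmix\asymp\log m$, imitating a walk on the cubic tree — and to \emph{dilate} it so as to push the cutoff to the prescribed order $t_n$. Concretely, replace each edge of $H_m$ by a cubic ``wire'' gadget of diameter $\Theta(\ell)$ whose internal relaxation time is $\Theta(\ell^2)$ (a slightly perturbed ladder $P_\ell\times K_2$, kept cubic by $O(1)$‑size caps, will do), and replace each vertex of $H_m$ by a cubic junction gadget with three ports; call the resulting cubic graph $G_n$. Choose $m,\ell\to\infty$ with $|G_n|\asymp m\ell\asymp n$ and $\ell^2\log m\asymp t_n$; since $\log n\le t_n=o(n^2)$ one checks that such a pair exists with $m\to\infty$ (as $t_n\uparrow n^2$ one is forced to take $\ell$ nearly $n$ and $m$ small but still $\to\infty$, which is exactly the boundary beyond which the second assertion takes over). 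On $G_n$ the walk performs a ``macro‑step'' — a passage from one junction to a neighbouring one — in $\Theta(\ell^2)$ steps with fluctuations of the same order, and the sequence of junctions visited is the \SRW\ on $H_m$ (up to an $O(1)$ holding probability coming from the gadgets). Hence after $T$ steps the walk has made $\asymp T/\ell^2$ macro‑steps, and it mixes precisely when (i) the induced walk on $H_m$ has mixed, which needs $\asymp\log m$ macro‑steps, i.e.\ $T\asymp\ell^2\log m\asymp t_n$, and (ii) the position inside the current wire has equilibrated, a further $\Theta(\ell^2)=o(t_n)$. Since the sum of $\asymp\log m$ macro‑step durations concentrates with fluctuation $O(\ell^2\sqrt{\log m})=o(t_n)$, the cutoff of $H_m$ (with its $o(\log m)$ window, from Theorem~\ref{thm-1}) is transported to a cutoff of $G_n$ at $\tmix\asymp t_n$ with window $o(t_n)$, uniformly over the starting state; the embedded smaller expanders of Theorem~\ref{thm-1} again absorb the worst‑case start.

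\textbf{No cutoff at $\tmix\asymp n^2$.} By the criterion of Peres~\cite{Peres}, a family of reversible chains with cutoff must have $\trel=o(\tmix)$, so it suffices to show that a bounded‑degree $n$‑vertex graph with $\tmix\asymp n^2$ has $\trel\asymp n^2$; then $\trel\not=o(\tmix)$ and no cutoff is possible. One direction is automatic, since $d(t)\ge\tfrac12\lambda_2^t$ gives $\trel=O(\tmix)$. For the reverse one uses that on a bounded‑degree graph the \SRW\ can mix as slowly as $n^2$ only if the graph is essentially one‑dimensional at the scale of its size, so that the slowest spectral mode is a ramp spread over $\Theta(n)$ vertices — which forces $\trel\asymp n^2$. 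Quantitatively, the crude estimate $\tmix\lesssim\trel\log n$ only yields $\trel\gtrsim n^2/\log n$; one sharpens it using two standard features of bounded‑degree graphs, namely the eigenvalue count $\#\{j:1-\lambda_j\le\epsilon\}\lesssim n\sqrt{\epsilon}$ and the attendant $L^\infty$‑control $\|f_j\|_\infty^2\lesssim n\sqrt{1-\lambda_j}$ on low eigenfunctions, to obtain $d(t)\lesssim n\,e^{-t/\trel}/\sqrt t$, i.e.\ $\tmix\lesssim\trel\bigl(1+\log(n^2/\trel)\bigr)$. Combined with $\tmix\asymp n^2$ this gives $\trel\asymp n^2$, as needed.

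\textbf{Where the difficulty lies.} The conceptual heart is the first part: showing that dilation genuinely \emph{preserves} cutoff rather than smearing it. A wire run on its own is a one‑dimensional chain with \emph{no} cutoff and a transition spread over its whole timescale $\Theta(\ell^2)$, so one must verify that this $\Theta(\ell^2)$‑scale relaxation is strictly lower order than the macro‑timescale $\Theta(\ell^2\log m)$ and does not interact with it, and that the concentration of the total duration of the $\asymp\log m$ macro‑steps holds uniformly over all initial positions — including starts in the interior of a wire and the genuinely worst‑case starts that the auxiliary expanders of Theorem~\ref{thm-1} are there to handle. The second part is comparatively routine, the only mild subtlety being the sharpening from $\trel\gtrsim n^2/\log n$ to $\trel\asymp n^2$ via the eigenvalue‑counting bound.
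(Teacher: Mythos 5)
Your first part is essentially the paper's own construction: the paper also takes the explicit cubic expander $H$ of Theorem~\ref{thm-1} (cutoff at $C\log m$), replaces every edge by a $3$-regular ``cylinder'' of length $L$, notes $n\asymp Lm$ and that the expected passage time through a cylinder is $L^2$, and invokes CLT-type concentration of the $\asymp\log m$ passage times to transport the cutoff to $\tmix=(C+o(1))L^2\log(n/L)\asymp t_n$; your wire/junction gadgets and the parameter choice $\ell^2\log m\asymp t_n$, $m\ell\asymp n$ are the same argument with the same bookkeeping, so no issue there.

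The gap is in your second part. You reduce, correctly, to showing $\trel\gtrsim n^2$ and then invoking Peres' criterion, but your route to that bound rests on two assertions presented as ``standard features of bounded-degree graphs'' that are not standard and are not proved: the eigenvalue count $\#\{j:1-\lambda_j\le\epsilon\}\lesssim n\sqrt{\epsilon}$ and, especially, the pointwise eigenfunction bound $\|f_j\|_\infty^2\lesssim n\sqrt{1-\lambda_j}$, which I do not know to be true in that form for a general bounded-degree graph; likewise ``the graph is essentially one-dimensional at the scale of its size'' is a heuristic, not an argument. (Your target inequality $\tmix\lesssim\trel\bigl(1+\log(n^2/\trel)\bigr)$ can in fact be reached without any eigenfunction control, e.g.\ from the universal return-probability bound $P^{2t}(x,x)-\pi(x)\le C/\sqrt{t}$ on bounded-degree graphs via $4d(2t)^2\le n\,\lambda_*^{t}\max_x\bigl(P^{t}(x,x)-\pi(x)\bigr)$, but that bound itself needs a citation or proof and is nowhere near ``routine''.) The paper avoids all of this with a two-line elementary argument: since $\tmix=O(\diam(G)\vol(G))$ and $\vol(G)=O(n)$, the hypothesis $\tmix\asymp n^2$ forces $\diam(G)\ge cn$; then plugging the test function $f(v)=\dist_G(x,v)$ (for $x$ an endpoint of a diameter) into the Dirichlet form gives $\mathcal{E}(f)\le 1$ while $\var_\pi(f)\gtrsim n^2$, because each of the two level-set blocks $f\le N/4$ and $f\ge 3N/4$ contains at least $N/4\ge cn/4$ vertices; hence $\gap=O(n^{-2})$ and Peres' criterion rules out cutoff. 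You should either supply proofs (or precise references) for your two spectral claims, or replace that step by the diameter/Dirichlet-form argument, which is both simpler and fully rigorous.
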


\section{Explicit constructions achieving cutoff}

\subsection{Proof of Theorem~\ref{thm-1}: explicit expanders with cutoff}\label{sec-explicit}
To simplify the exposition, we will first construct a family of 5-regular expanders where the \SRW\ from a worst initial position exhibits cutoff. Subsequently, we will describe how to modify the construction to yield a family of cubic expanders with this property (as per the statement of Theorem~\ref{thm-1}).

The graph we construct will contain a smaller explicit expander on a fixed proportion of its vertices, connected to what is essentially a product of another expander with a ``stretched'' regular tree (one where the edges in certain levels are replaced by paths).

Let $h\to\infty$ and let $H_1,H_2$ be two explicit expanders as follows (cf.\ e.g.~\cite{Ajtai} for an explicit construction of a $3$-regular graph, as well as \cite{RVW} and the references therein for additional explicit constructions of constant-degree expander graphs):
\begin{itemize}
  \item $H_1$ : An explicit $3$-regular expander on $20\cdot 2^{2h}$ vertices.
  \item $H_2$ : An explicit $4$-regular expander on $20 \cdot 2^{6h}$ vertices.
\end{itemize}
Let $\lambda(H_i)$ denote the largest absolute-value of any nontrivial eigenvalue of $H_i$ for $i=1,2$.
Finally, let $L$ be some sufficiently large fixed integer whose value will be specified later.

Our final construction for $5$-regular expander will be based on a (modified) regular tree, hence it will be convenient to describe its structure according to the various levels of its vertices. Let the vertex $\rho$ denote the root of the tree, and construct the graph $G$ as follows:
\begin{enumerate}[\quad 1.]
  \item Levels $0,1,2$: First levels of a $5$-regular tree rooted at $\rho$.
  \begin{list}{\labelitemi}{\leftmargin=1em}
    \item Denote by $U=\{u_1,\ldots,u_{20}\}$ the vertices comprising level $2$.
  \end{list}
  \item \label{cons-part-1} Levels $3,\ldots,h+2$: Stretched $4$-ary trees rooted at $U$:
  \begin{list}{\labelitemi}{\leftmargin=1em}
    \item For each $u_i\in U$ place an $h$-level $4$-ary tree $\cT_{u_i}$ rooted at $u_i$
and identify the vertices of $\cT_{u_i}$ and $\cT_{u_j}$ via the trivial isomorphism.

  \item \label{cons-part-2} Replace every edge of each $\cT_{u_i}$ by a (disjoint) path of length $L$.

    Connect $\cT_{u_1}^*$, the new interior vertices in $\cT_{u_1}$ (with initial degree $2$)
to their isomorphic counterparts in $\cT_{u_2}^*,\cT_{u_3}^*,\cT_{u_4}^*$ (add $4$-cliques between identified interior vertices) and similarly for $\{\cT_{u_5}^*,\ldots,\cT_{u_{8}^*}\}$ etc.
    \item Let $A$ denote the final $20\cdot 4^h$ vertices comprising level $h+2$, and associate the vertices of $A$ with those of $H_1$.
  \end{list}
  \item \label{cons-part-3}
  Levels $h+3,\ldots,2h+2$: Product of $H_1$ and a stretched $4$-ary tree.
   \begin{list}{\labelitemi}{\leftmargin=1em}
    \item For each $a \in A$ place an $h$-level $L$-stretched $4$-ary tree $\cT_a$.

     Connect vertices in $\cT_a^*$ with their counterparts in $\cT_b^*$ for $ab \in E(H_1)$.
    \item Let $B$ denote the final $20\cdot 4^{2h}$ vertices comprising level $2h+2$.
  \end{list}
  \item Levels $2h+3,\ldots,3h+2$: A forest of $4$-ary trees rooted at $B$.
  \item Last level: Associate leaves with $H_2$ and interconnect them accordingly.
\end{enumerate}
\begin{figure}
\centering \includegraphics[width=4.5in]{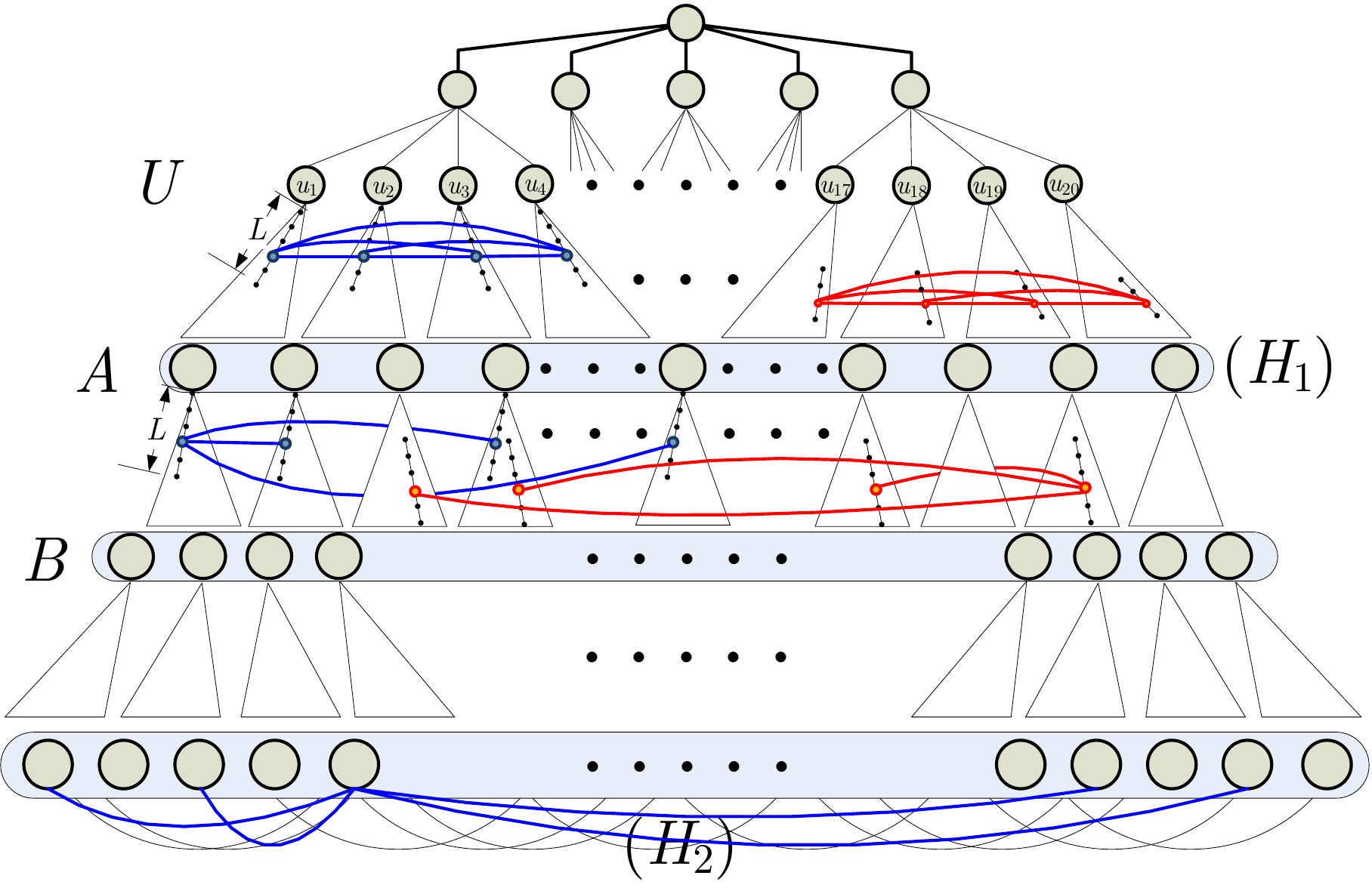}
\caption{Explicit construction of a $5$-regular graph on which the random walk exhibits
total-variation cutoff.}
\label{fig-expcons}
\end{figure}
Finally, the aforementioned parameter $L$ is chosen as follows: Denote by
\[\gap_1=\inf_{|H_1|}\left(1-\lambda(H_1)/3\right)~,~\gap_2=\inf_{|H_2|}\left(1-\lambda(H_2)/4\right)\] the minimum spectral-gaps in the explicit expanders that were embedded in our construction (recalling from the introduction that $\gap_i > 0$ for both $i=1,2$ by the definition of expanders together with the discrete analogue of Cheeger's inequality), and define
\begin{equation}
   \label{eq-L-choice}
   L = \bigg\lceil\frac{2}{\sqrt{\gap_1}} \;\vee\; \frac{16}{\gap_2}\;\vee\; 32\bigg\rceil\,.
 \end{equation}
See Fig.~\ref{fig-expcons} for an illustration of the above construction.

We begin by establishing the expansion of the above constructed $G$. Throughout the proof we omit ceilings and floors in order to simplify the exposition.
\begin{lemma}\label{lem-exp}
Let $\kappa = (\iso(H_2)\wedge 1)/3$ for $H_2$ our explicit $4$-regular expander.
For any integer $L > 0$, the Cheeger constant of the above described $5$-regular graph $G$ with parameter $L$ satisfies $\iso(G) \geq \kappa/25L$. Moreover, the induced subgraph $\tilde{G}$ on the last $h$ levels (i.e., levels $2h+2,\ldots,3h+2$)
has $\iso(\tilde{G}) \geq \kappa$.
\end{lemma}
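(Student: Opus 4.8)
The plan is to derive both inequalities from two ingredients: the isoperimetry of the $4$-regular expander $H_2$ embedded in the last level, and a combinatorial bound controlling how much ``tree mass'' a set can carry per tree-edge it cuts. Let $\cT$ be the spanning tree of $G$ obtained by deleting the clique-edges on the sets $\cT_{u_i}^*$, the $H_1$-edges on the sets $\cT_a^*$, and the $H_2$-edges; its leaf set is exactly the last level, which I write $\Lambda$ (so $|\Lambda|=20\cdot4^{3h}$), and $\cT$ restricts on $V(\tilde G)$ to the forest of depth-$h$ $4$-ary trees rooted at $B$. For $S\subseteq V(G)$ write $S=S^\circ\sqcup S^\Lambda$ for its non-leaf and leaf parts. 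As the $\cT$-edges and the $H_2$-edges are disjoint edge sets of $G$, one always has $|\partial_G S|\ge|\partial_\cT S|$ and $|\partial_G S|\ge|\partial_{H_2}(S^\Lambda)|\ge\iso(H_2)\min(|S^\Lambda|,|\Lambda|-|S^\Lambda|)$; the whole argument consists in playing these two bounds against each other.

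Consider first $\tilde G$ with $|S|\le|V(\tilde G)|/2$. Looking at the components of $S$ in the forest and using that every internal node has exactly four children, the quantity $4|S^\circ|-\#\{\text{tree-edges inside }S\}$ equals $3|S^\circ|-|S^\Lambda|+\#(\text{components})$ and also counts tree-edges leaving $S$ downward, hence is $\le|\partial_{\tilde G}S|$; this gives $3|S^\circ|\le|S^\Lambda|+|\partial_{\tilde G}S|$, so $|S|\le\tfrac43|S^\Lambda|+\tfrac13|\partial_{\tilde G}S|$. Counting internal nodes of $\tilde G$ shows there are fewer than $\tfrac13|\Lambda|$ of them, so $|\Lambda|>\tfrac34|V(\tilde G)|$ and therefore $|S^\Lambda|\le|S|<\tfrac23|\Lambda|$, which forces $\min(|S^\Lambda|,|\Lambda|-|S^\Lambda|)\ge\tfrac12|S^\Lambda|$ and thus $|\partial_{\tilde G}S|\ge\tfrac12\iso(H_2)|S^\Lambda|\ge\tfrac32\kappa|S^\Lambda|$ (as $\iso(H_2)\ge3\kappa$ by the definition of $\kappa$). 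Substituting this into $|S|\le\tfrac43|S^\Lambda|+\tfrac13|\partial_{\tilde G}S|$, a one-line computation using $\kappa\le\tfrac13$ yields $|\partial_{\tilde G}S|\ge\kappa|S|$, which is the ``moreover'' assertion.

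For $G$ the four-ary count has to be modified to pay for the stretching. Fix a component $C$ of $S$ in $\cT$ and contract every maximal run of vertices that have exactly one child inside $C$; in the contracted tree, the vertices of $C$ with at least two children in $C$ form the internal nodes and number at most $\#\{\text{leaves of }C\}+1$, while each leaf of $C$ outside $\Lambda$ spends at least one edge of $\partial_\cT S$. A contracted run is a string of genuine stretch-vertices — which in $\cT$ occur in blocks of length $\le L-1$ — separated only by $4$-ary branch-nodes at which $C$ loses at least three children; hence a run has length at most $L-1$ plus $L$ times the number of such ``drop-nodes'' on it, and the drop-nodes over all runs and components total at most $|\partial_\cT S|$. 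Summing over components produces $|S^\circ|\le(3L+1)|\partial_\cT S|+2L|S^\Lambda|\le(3L+1)|\partial_G S|+2L|S^\Lambda|$. Since $L$ is fixed and $h\to\infty$, for large $h$ the non-leaf vertices of $G$ number $\le\tfrac12|\Lambda|$, so as before $\min(|S^\Lambda|,|\Lambda|-|S^\Lambda|)\ge\tfrac13|S^\Lambda|$ and $|S^\Lambda|\le\tfrac{3}{\iso(H_2)}|\partial_G S|\le\kappa^{-1}|\partial_G S|$. Adding the two displays, $|S|=|S^\circ|+|S^\Lambda|\le(3L+1)|\partial_G S|+(2L+1)\kappa^{-1}|\partial_G S|\le 5L\kappa^{-1}|\partial_G S|$ by $\kappa\le\tfrac13$ and $L\ge1$, i.e.\ $\iso(G)\ge\kappa/(5L)\ge\kappa/(25L)$.

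The one genuinely delicate step is the stretched-tree estimate in the third paragraph: one must argue that a connected piece of $S$ cannot run a long distance through the construction without either reaching the last level or cutting many tree-edges, and the drop-node bookkeeping inside the contracted runs is exactly where the stretch length $L$ enters. This loss of a factor $\asymp L$ is unavoidable — a horizontal slab across one stretch-layer has isoperimetric ratio $\asymp1/L$ — so the slack between the $5L$ we obtain and the $25L$ in the statement is merely a matter of not optimizing constants. Everything else (the four-ary identity, the leaf-fraction counts, and invoking $\iso(H_2)>0$ via Cheeger's inequality for the embedded expander) is routine.
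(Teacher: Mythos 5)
Your proof is correct, but it takes a genuinely different route from the paper's. The paper first deletes the clique/$H_1$ cross-edges, then \emph{contracts} every $L$-path and proves a standalone claim that subdividing the edges of a graph of maximal degree $\Delta$ into $L$-paths lowers the Cheeger constant by at most a factor $\Delta^2L$ (proved by massaging an optimal cut, extracting a connected component, and projecting it back to the contracted graph $F$); it then shows $\iso(F)\geq\kappa$ by a two-case analysis pitting the Cheeger constant $3$ of the infinite $5$-regular tree against $\iso(H_2)$ on the leaves, and the same case analysis handles $\tilde G$. You never contract: you run a direct count on the stretched spanning tree --- the four-children identity in the unstretched bottom forest, and the branch-node/run/drop-node decomposition in the stretched part, where each drop-node pays at least three tree-boundary edges --- and combine it with the $H_2$-expansion of the leaf part exactly as the paper does. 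What each approach buys: the paper's contraction claim is modular and applies verbatim to any subdivided bounded-degree graph, while your direct count avoids the cut-modification/component-extraction step, treats $G$ and $\tilde G$ by one counting identity, and yields the slightly stronger constant $\kappa/(5L)$. Both arguments need $h$ large relative to the fixed $L$ so that the last level carries most of the mass (the paper through the hypothesis $|G|\leq\frac32|F|$ in its claim, you through ``non-leaf vertices $\leq\frac12|\Lambda|$''), so you lose no generality there. The only soft spot is that the constants in $|S^\circ|\leq(3L+1)|\partial_\cT S|+2L|S^\Lambda|$ are asserted rather than derived, but a routine count of branch nodes (at most the number of leaves of the component), runs (at most branch nodes plus leaves), and drop-nodes (at most $\frac13|\partial_\cT S|$, each losing three children to distinct boundary edges) gives $|S^\circ|\leq 2L|\partial_\cT S|+2L|S^\Lambda|$, so the slack is ample and this is bookkeeping, not a gap.
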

\begin{proof}
First consider the entire graph $G$. Since we are only interested in a lower bound on $\iso(G)$, clearly it is valid to omit edges from the graph, in particular we may erase the cross edges between any subtrees $\cT_{u}^*,\cT_{v}^*$ described in Items~\ref{cons-part-2},\ref{cons-part-3} of the construction. This converts every stretched edge of the $5$-regular tree of $G$ simply into a $2$-path (one where all interior vertices have degree 2) of length $L$.

Next, we contract all the above mentioned $2$-paths into single edges and denote the resulting graph by $F$. The next simple claim shows that this decreases the Cheeger constant by at most $O(L)$.

\begin{claim}
Let $F$ be a connected graph with maximal degree $\Delta$ and let $G$ be a graph on at most $\frac32|F|$ vertices obtained via replacing some of the edges of $F$ by $2$-paths of length $L$. Then $\iso(G) \geq \iso(F) /\Delta^2 L$.
\end{claim}
\begin{proof}
Let $X\subset V(G)$ be a set of cardinality at most $|G|/2$ achieving the Cheeger constant of $G$.
We may assume that $\Delta \geq 3$ otherwise $F$ is a disjoint union of paths and cycles and the result holds trivially.

Notice that if $X$ contains two endpoints of a $2$-path $\cP=(x_0,\ldots,x_L)$ while only containing $k < L-1$ interior vertices of $\cP$ then we can assume that $X \cap \cP = \{x_0,\ldots,x_k,x_L\}$, i.e.\ all the interior vertices are adjacent (this maintains the same cardinality of $X$ while not increasing $\partial X$).
With this in mind, modify the set $X$ into the set $X'$ by repeating the following operation: As long as there is a 2-path $\cP$ as above (with $x_0,\ldots,x_k$ and $x_L$ in $X$ for some $k < L-1$) we replace $x_L$ by $x_{k+1}$.
This maintains the cardinality of the set while increasing its edge-boundary by at most $\Delta-2$ (as $x_L$ formerly contributed at least $1$ edge to this boundary due to $x_{L-1}\notin X$).
Altogether, this yields a set $X'$ where no $2$-path $\cP=(x_0,\ldots,x_L)\nsubseteq X'$ has both $x_0,x_L\in X'$, while $X'$ satisfies
\[ |\partial X'|/|X'| \leq (\Delta-1)\iso(G)\,.\]
The obtained subset $X'$ is possibly disconnected, and we will next argue that its connected components satisfy an appropriate isoperimetric inequality.
Consider $X''$, the connected component of $X'$ that minimizes $|\partial X''|/|X''|$.
If $X''$ is completely contained in the interior of one of the new $2$-paths then the statement of the claim immediately holds since
\[ \iso(G) \geq \frac{|\partial X'|}{(\Delta-1)|X'|}  \geq \frac{|\partial X''|}{(\Delta-1)|X''|}\geq \frac{2}{(L-1)(\Delta-1)} \geq \frac{\iso(F)}{\Delta^2 L}\,,\]
with the last inequality due to the fact $\iso(F) \leq \Delta$.
Suppose therefore that this is not the case hence we may now assume that $X''$ contains at least one endpoint of any $2$-path it intersects.

Let $Y = X'' \cap V(F)$, i.e.\ the subset of the vertices of $F$ obtained from $X''$ by excluding any vertex that was created in $G$ due to subdivision of edges.
Observe that our assumption on $X'$ implies that
\[|\partial Y| = |\partial X''|\,,\] since either a $2$-path $\cP$ is completely contained in $X''$ (not contributing to $\partial X''$) or $\cP \cap X = \{x_0,\ldots,x_k\}$ for some $k < L$ (contributing the edge $x_k,x_{k+1}$ to $\partial X''$, corresponding to the edge $x_0,x_L$ in $\partial Y$).

It remains to consider $|Y|$. Clearly, $X''$ can be obtained from $Y$ by adding at most $\Delta$ new 2-paths with $L-1$ new interior points per vertex, hence
\[ |Y| \geq |X''|/\Delta L \,.\]
On the other hand, since $|Y|\leq |X''|\leq |G|/2$ and $|G| \leq \frac32 |F|$ we have
\[ |Y|/|F| \leq \tfrac32 |X''|/|G| \leq \tfrac34\,,\]
which together with the fact that $|X''|\leq |G|/2$ implies that
\[ |V(F) \setminus Y | \geq \tfrac14 |F| \geq \tfrac16 |G| \geq \tfrac13 |X''| \geq |X''|/ \Delta L\,.\]
Altogether,
\begin{equation*}
\iso(F) \leq \frac{|\partial Y|}{|Y|\;\wedge\; |V(F)\setminus Y|}
\leq \Delta L \frac{|\partial X''|}{|X''|} \leq \Delta^2 L\, \iso(G)\,.
\qedhere
\end{equation*}
\end{proof}
In light of the above claim we have $\iso(G) \geq \iso(F)/25 L$ where the graph $F$ is the result of taking a complete $5$-regular tree of height $3h+2$ levels and connecting its $5\cdot 4^{3h+1}$ leaves, denoted by $F'$, via the $4$-regular expander $H_2$.
It therefore remains to show that $\iso(F) \geq \kappa$.

Let $S$ be a set of size $s \leq |F|/2$ vertices that achieves $\iso(F)$. Define its subset of the leaves $S' = S\cap F'$ and set $s'=|S'|$.
Since $|F'| \geq \frac34 |F|$ we clearly have $s' \leq s \leq \frac23 |F'|$ hence
$\left(|S'| \wedge |F' \setminus S'|\right) \geq s'/2$. We thus have the following two options:
\begin{enumerate}
  \item $s' \geq \frac23 s$: In this case
  \[|\partial_F S| \geq |\partial_{F'} S'| \geq \iso(H_2) s'/2 \geq \iso(H_2)s/3\,.\]
  \item $s' < \frac23s$: Letting $\mathbb{T}_5$ denote the infinite $5$-regular tree (whose Cheeger constant equals 3) we get
  \[ |\partial_{F} S| \geq \iso(\mathbb{T}_5)(s - s')-s' = 3s - 4s' > s/3\,.\]
\end{enumerate}
Altogether we deduce that
\[ \iso(G) \geq \iso(F) \geq (\iso(H_2)\wedge 1)/3 = \kappa\,.\]

The second part of the lemma (the statement on the subgraph $\tilde{G}$) follows from essentially the same argument given above for $\iso(F)$, as $\tilde{G}$ is precisely a forest of $5$-regular trees of height $h$ where all the leaves are connected via the expander $H_2$. Again we get $\iso(\tilde{G})\geq \kappa$, completing the proof.
\end{proof}

Consider the \SRW\ started $k\leq h$ levels above the bottom (i.e.\ at level $3h+2-k$) of the graph $G$. The height of the walk is then a one-dimensional biased random walk with positive speed $\frac35$, implying that it would reach the bottom after $\frac53 k + o(h)$ steps with high probability.

On the other hand, if the \SRW\ is started closer to the root, i.e.\ at level $2h+2-k$, then the one-dimensional random walk is delayed by two factors, horizontal (cross-edges) and vertical (stretching the edges into paths). Until reaching level $2h+2$ (after which the previous analysis applies), these delays are encountered along $\frac53k+o(h)$ stretched edges with the following effect:
\begin{compactitem}
  \item The former incurs a laziness delay with probability $\frac35$ whenever the walk is positioned on an interior vertex of a 2-path.
    \item The latter delays the walk by the passage time of a \SRW\ through an $L$-long 2-path, where the walk leaves the origin with probability $1$.
\end{compactitem}
It is well-known (and easy to derive) that the expected passage time of the one-dimensional \SRW\ from $0$ to $\pm L$ is precisely $L^2$ and the expected number of visits to the origin by then (including the starting position) is exactly $L$. It thus follows that the expected delay of the one-dimensional walk representing our height in the tree along a single stretched edge is
\[\tfrac52 (L^2 - L) + L = \tfrac12L(5 L - 3)\,.\]

Combining the above cases we arrive at the following conclusion:
\begin{claim}\label{clm-tau-ell}
Consider the \emph{\SRW} on the graph $G$ started at a vertex on level $s\in\{0,\ldots,3h+1\}$. Set $\alpha= s/h$  and let $\tau_\ell$ be the hitting time of the walk to the leaves (i.e.\ to level $3h+2$). Then w.h.p.
\[ \tau_\ell = \left\{\begin{array}{ll}
  (\frac53+o(1))\left[L(5 L - 3) (1-\frac{\alpha}2) + 1\right]  h & \mbox{If $0 \leq \alpha \leq 2$}\,,\\
  \frac53 (3-\alpha) h+o(h) & \mbox{If $\alpha \geq 2 $}\,.
\end{array}\right.\]
\end{claim}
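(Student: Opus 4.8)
The plan is to track the \emph{height} of the walk (the level index, or rather $3h+2$ minus the level), and argue that it evolves as a one-dimensional nearest-neighbor process whose expected per-step increments have already been computed in the discussion preceding the claim; then a law-of-large-numbers / concentration argument upgrades the expectation to a high-probability statement. I would split the analysis at level $2h+2$, since below that level the graph (after ignoring cross-edges, which do not affect the height coordinate at all) is an honest $5$-regular tree, whereas above it there are two additional sources of delay.

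\textbf{Step 1: the easy regime $\alpha\ge 2$.} Here the walk starts at level $s = \alpha h \ge 2h$, i.e.\ within the bottom $h$ levels, which form the forest of $5$-regular trees described in Levels $2h+3,\ldots,3h+2$ (no stretching, no cross-edges affecting the height). On a $5$-regular tree the height process, away from the root, moves down with probability $\frac45$ and up with probability $\frac15$ at each step, so it has drift $\frac35$ downward; by a standard concentration bound for biased nearest-neighbor walk (or by the strong law applied to the i.i.d.\ excursion structure), the time to descend the remaining $3h+2-s = (3-\alpha)h + O(1)$ levels is $\frac53(3-\alpha)h + o(h)$ w.h.p. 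The cross-cliques between isomorphic interior vertices of stretched subtrees and the $H_1$-edges change the identity of the vertex but not its level, hence are irrelevant to $\tau_\ell$; this observation is what makes the whole ``height'' reduction legitimate, and I would state it once at the outset.

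\textbf{Step 2: the regime $0\le\alpha\le 2$.} Start at level $s = \alpha h \le 2h$. The walk must first reach level $2h+2$ and then descend the final $h$ levels; by Step~1 (with $\alpha=2$) the second phase costs $(\frac53+o(1))h$ w.h.p., and this is exactly the ``$+1$'' term in the claimed formula (the bracket evaluated with the stretched-edge contribution removed). For the first phase, the walk traverses $\frac53k + o(h)$ \emph{stretched edges} of the underlying $5$-ary tree (where $k = (2h+2) - s = (2-\alpha)h + O(1)$ is the number of tree-levels to climb down, and the factor $\frac53$ is again the reciprocal drift of the macroscopic height on the tree skeleton). Along each such stretched edge the walk incurs (i) the passage time through an $L$-long $2$-path, with the walk certain to leave each endpoint (expected cost $L^2$ with expected $L$ returns to the origin), and (ii) on each of those $\approx L$ returns to an interior vertex an extra laziness delay: with probability $\frac35$ the walk steps ``sideways'' along a $4$-clique to an isomorphic copy at the same level, re-entering an interior vertex, and the expected length of such a sideways sojourn before resuming the $2$-path is $\frac52$ (geometric with success probability $\frac25$). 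These are precisely the terms combined in the displayed identity $\frac52(L^2-L)+L = \frac12 L(5L-3)$ preceding the claim; summing over the $\frac53 k$ stretched edges gives expected first-phase time $(\frac53+o(1))\cdot\frac12 L(5L-3)\cdot(2-\alpha)h$, which rearranges to $(\frac53+o(1)) L(5L-3)(1-\frac\alpha2)h$, and adding the second-phase $(\frac53+o(1))h$ produces the stated bracket.

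\textbf{Step 3: from expectation to high probability.} The per-stretched-edge delays are i.i.d.\ across distinct stretched edges and have finite variance (uniformly in $h$, since $L$ is a fixed constant and each delay is a sum of a geometric number — mean $L$ — of passage-time pieces, each with exponential tails), so Chebyshev (or a Bernstein-type bound) over the $\Theta(h)$ independent edges gives concentration of the total first-phase time to within $o(h)$. The \emph{number} of stretched edges traversed is itself $\frac53 k + o(h)$ w.h.p.\ by the same biased-walk concentration used in Step~1 applied to the tree skeleton; one has to be slightly careful that the skeleton walk may backtrack and re-traverse edges, but each re-traversal costs another i.i.d.\ delay of the same law, and the total number of traversals (counted with multiplicity) is still $\frac53 k + o(h)$ w.h.p., so the estimate is unaffected. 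I expect the only genuinely fiddly point — the ``main obstacle'' — to be bookkeeping the interaction between the two delay mechanisms at an interior vertex (making sure the geometric sideways excursions are correctly interleaved with the $2$-path passage and that the ``leaves the origin with probability $1$'' boundary condition is applied at the right vertices, namely the branch vertices of the tree where the degree jumps back to $5$), but since all of these are local, bounded-range modifications of a simple random walk on $\Z$, each contributes an $O(1)$ expectation and $O(1)$ variance, so the $o(h)$ error absorbs them.
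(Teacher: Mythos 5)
Your proposal is correct and follows essentially the same route as the paper, whose proof of the claim is exactly the preceding discussion: reduce to the level (height) process, note the skeleton walk descends with speed $\frac35$ so that $\frac53 k+o(h)$ stretched edges are traversed, charge each traversal the expected lazified passage time $\frac52(L^2-L)+L=\frac12L(5L-3)$, add $\frac53 h$ for the unstretched bottom levels, and upgrade to a with-high-probability statement by concentration of the crossing count and of the i.i.d.\ passage times, which you spell out more explicitly than the paper does. One small wording slip: the factor $\frac52$ is incurred on each of the $\approx L^2-L$ coordinate-moves made from interior (cross-connected) vertices of a $2$-path, not on the $\approx L$ returns to the origin (branch vertices carry no laziness and contribute the $+L$ term), though the identity you invoke is exactly this correct accounting.
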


The next lemma relates $\tau_\ell$, the hitting time to the leaves (addressed by the above claim), and the mixing of the \SRW\ on the graph.
\begin{lemma}\label{lem-tmix-upper-bound-from-top}
Let $\epsilon > 0$, let $s_0$ be some vertex on level $l_0\in\{0,\ldots,h+2\}$ and $T=(1+\delta)\E_{s_0}\tau_\ell$ for $\delta >0$ fixed, where $\tau_\ell$ is the hitting time of the \emph{\SRW} to the leaves. Then
$ \|\P_{s_0}(S_{T} \in \cdot) - \pi \|_\mathrm{TV} < \epsilon$ for any sufficiently large $h$.
\end{lemma}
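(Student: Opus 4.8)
Here is the plan. Morally the walk descends a $5$‑regular tree, and the content of the lemma is that by the time it reaches the bottom it has already spread out over the (overwhelmingly dominant) bottom part of $G$. I would prove it in three steps: (1) show that $S_{\tau_\ell}$ is within $\epsilon/4$ in total variation of the uniform distribution on the leaves; (2) show that, started from the uniform distribution on the leaves, the walk is within $\epsilon/4$ of $\pi$ after only a \emph{bounded} number $C_0=C_0(\epsilon)$ of further steps; and (3) glue these together using the concentration of $\tau_\ell$ from Claim~\ref{clm-tau-ell}, which places $T$ comfortably past $\tau_\ell$, with a residual time that tends to infinity.

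\emph{Step (1), the main step.} Since $l_0\le h+2$, the walk must traverse all $h$ levels of the ``product of $H_1$ with a stretched $4$‑ary tree'' region (levels $h+3,\dots,2h+2$). Restricted to the interior vertices of a fixed level's $L$‑long $2$‑paths, the walk performs a lazy random walk on $H_1$: it spends $\Theta(L^2)$ steps traversing one such level, while $H_1$ mixes in $O(\gap_1^{-1})$ steps, and the choice of $L$ in~\eqref{eq-L-choice} was made precisely so that $L^2\gtrsim\gap_1^{-1}$. Hence the horizontal coordinate is re‑mixed on each of the $h$ level‑traversals, and upward excursions back into this region or above it — which occur with negligible probability because of the positive downward drift, cf.\ Claim~\ref{clm-tau-ell} — only re‑randomise it further; I would conclude that the location of the walk on the set $B$ (level $2h+2$) at its \emph{last} visit there before $\tau_\ell$ is within $\epsilon/8$ of uniform on $B$, and essentially independent of the \emph{time} of that visit. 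Now decompose $\tau_\ell=\sigma_B+\xi$, where $\sigma_B$ is that last visit to $B$ and $\xi=\tau_\ell-\sigma_B$; this is well defined since the walk crosses $B$ en route to the leaves, and before the first leaf is hit no $H_2$‑edge is used, so between $\sigma_B$ and $\tau_\ell$ the walk lives in the perfectly symmetric $4$‑ary tree hanging below $S_{\sigma_B}$. By that symmetry, conditionally on $\mathcal F_{\sigma_B}$ the leaf $S_{\tau_\ell}$ is \emph{exactly} uniform among the $4^h$ leaves below $S_{\sigma_B}$, while $\xi$ has a law depending only on the tree structure; combined with near‑uniformity of $S_{\sigma_B}$ this yields $\bigl\|\P_{s_0}(S_{\tau_\ell}\in\cdot\mid\tau_\ell=t)-(\text{uniform on leaves})\bigr\|_\mathrm{TV}<\epsilon/4$ for every relevant $t$ once $h$ is large.

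\emph{Steps (2) and (3).} Since $G$ is $5$‑regular, $\pi$ is uniform on $V(G)$, and $|V(G)\setminus\tilde G|=o(|V(G)|)$ (the levels above $2h+2$ contribute only $O(L\,4^{2h})$ vertices against $|\tilde G|\asymp 4^{3h}$), so $\pi$ is within $o(1)$ of uniform on $\tilde G$. Coupling the walk on $G$ with the walk on the induced subgraph $\tilde G$ — they agree until the latter reaches level $2h+1$, a large deviation for the downward‑drifting height that has probability $o(1)$ over $\mathrm{poly}(h)$ steps — one observes that on $\tilde G$ the map sending a vertex to its level pushes a distribution that is uniform on each level to one that is again uniform on each level (every interior vertex has one parent and four children, every leaf has one parent and four $H_2$‑neighbours, and $H_2$ is regular). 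Hence the level of the walk is a genuine Markov chain on $\{0,\dots,h\}$ with downward drift and geometric stationary law, so relaxation time $O(1)$; started at level $0$ it equilibrates in $C_0=O(\log(1/\epsilon))$ steps to its stationary law, which is uniform on each level, i.e.\ up to $o(1)$ uniform on $\tilde G$, i.e.\ up to $o(1)$ equal to $\pi$. Thus the walk started from the uniform distribution on the leaves is within $\epsilon/4$ of $\pi$ after any $C_0\le m\le\mathrm{poly}(h)$ steps. For (3): $l_0\le h+2$ gives $\alpha=l_0/h\le 1+o(1)$, so Claim~\ref{clm-tau-ell} (whose exponential upper tail from the positive drift also pins down the mean) gives $\E_{s_0}\tau_\ell\asymp L^2h\to\infty$ and $\P_{s_0}(\tau_\ell\le T')\ge 1-\epsilon/4$ for $T':=(1+\tfrac\delta2)\E_{s_0}\tau_\ell$; note $T-T'=\tfrac\delta2\E_{s_0}\tau_\ell$ exceeds $C_0$ for large $h$ while $T=\mathrm{poly}(h)$. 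Writing $\P_{s_0}(S_T\in\cdot)$ as a mixture over $\tau_\ell=t$ of (the walk from law $\P_{s_0}(S_{\tau_\ell}\in\cdot\mid\tau_\ell=t)$ run for $T-t$ further steps), then invoking Step (1), the contraction of $\|\cdot\|_\mathrm{TV}$ under one step of the chain, and Step (2) with $m=T-t\ge C_0$ for $t\le T'$, and bounding the $t>T'$ contribution by $\P_{s_0}(\tau_\ell>T')\le\epsilon/4$, gives $\|\P_{s_0}(S_T\in\cdot)-\pi\|_\mathrm{TV}<\epsilon/4+\epsilon/4+\epsilon/4<\epsilon$.

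\emph{Main obstacle.} The crux is Step (1): proving that the horizontal coordinate mixes faster, level by level, than the walk descends through the $H_1$‑product region, and that this holds \emph{robustly} under the walk's upward excursions, so that the last visit to $B$ before $\tau_\ell$ lands at an essentially uniform vertex essentially independently of its timing. Everything else — the exact symmetry of the $4$‑ary trees, the bounded‑time equilibration of the height coordinate, and the concentration of $\tau_\ell$ — is comparatively routine given the setup already in place.
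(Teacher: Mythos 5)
Your proposal is correct, and its engine is the same as the paper's: in both arguments the decisive point is that while descending the $H_1$-product region (levels $h+3,\dots,2h+2$) the walk performs at least $L^2h$ genuine \SRW\ steps on $H_1$, and the choice of $L$ in \eqref{eq-L-choice} makes this enough to uniformize the $H_1$-coordinate to within $|A|^{-2}$; combined with the exact symmetry of the stretched $4$-ary trees this makes the position essentially uniform over $B$ and over each lower level, and the concentration of $\tau_\ell$ from Claim~\ref{clm-tau-ell} transfers everything to the deterministic time $T$. Where you genuinely differ is the endgame. The paper never stops the walk at $\tau_\ell$: it splits the distance at time $T$ into the within-level conditional laws \eqref{eq-tv-in-level} (handled by the $H_1$/symmetry argument just described) and the law of the level itself \eqref{eq-tv-level}, which it matches via a monotone coupling of the height of $(S_t)$ with that of a stationary copy $(\tilde S_t)$. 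You instead invoke the strong Markov property at $\tau_\ell$, prove near-uniformity of the hitting distribution on the leaves essentially independently of $\tau_\ell$ (through the last visit $\sigma_B$ to $B$ and the exact leaf symmetry below it), and then argue that from uniform-on-the-leaves the walk reaches $\pi$ in $O(\log(1/\epsilon))$ further steps, since the height above the leaves is an autonomous biased birth-and-death chain whose within-level uniformity is preserved. This buys a cleaner quantitative picture of what happens after $\tau_\ell$, at the cost of two obligations the paper's coupling avoids: (i) the uniformity of $S_{\sigma_B}$ must be established conditionally on the vertical trajectory (equivalently, on the count of $H_1$-moves, which is precisely what the paper conditions on), so that it is valid given $\tau_\ell=t$ for typical $t$, with excursions back above $B$ only contributing further $H_1$-steps and excursions above level $h+2$ exponentially unlikely; and (ii) a short verification that the height chain on the bottom region has spectral gap bounded away from $0$ uniformly in $h$ and stationary law matching the level profile of $\pi$ up to $o(1)$. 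Both are routine, and you correctly identified (i) as the crux, so I see no gap in substance.
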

\begin{proof}
Let $(S_t)$ denote the \SRW\ started at some vertex $s_0$ in level $l_0 \leq h+2$ and $\pi$ be the uniform distribution on $V(G)$. Let $(\tilde{S}_t)$ be a random walk started from the uniform distribution $\tilde{S}_0 \sim \pi$.
Write $\cL_i$ for $i\in\{0,\ldots,3h+2\}$ for the vertices of level $i$ in $G$ (accounting for all the vertices except interior ones along the 2-paths of length $L$ corresponding to stretched edges) and let
 $\psi:G\to \{0,\ldots,3h+2\}$ map vertices in the graph to their level (while mapping interior vertices of $2$-paths to the lower of their endpoint levels).

Further let $\Omega = \{2h+3,\ldots,3h+2\}$. Clearly, for large enough $h$ we have
\[ \P\big(\psi(\tilde{S}_0) \notin \Omega\big) < \frac{\epsilon}{10}\,.\]
Furthermore, due to the bias of the \SRW\ towards the leaves and the fact that $\tau_\ell=(1+o(1))\E \tau_\ell \asymp h$ (recall Claim~\ref{clm-tau-ell} and that $L$ is fixed) we deduce that
$\psi(S_{T}) > \frac52h$ except with probability exponentially small in $h$, and in particular for any sufficiently large $h$
\[ \P\big(\psi(S_T) \notin \Omega\big) < \frac{\epsilon}{10}\,.\]
Therefore, an elementary calculation shows that if $\psi(S_T)$ and $\psi(\tilde{S}_T)$ are close in total-variation and so are $S_T$ given $\psi(S_T)=i$ and $\tilde{S}_T$ given $\psi(\tilde{S}_T)=i$ for all $i\in\Omega$, then the required statement on $\|\P(S_T\in\cdot)-\pi\|$ would follow.
Namely, if we should show that at time $t=T$ we have
\begin{align}
\label{eq-tv-in-level}\left\|\P\big(S_t \in \cdot \mid \psi(S_t)=i\big) - \P\big(\tilde{S}_t \in \cdot \mid \psi(\tilde{S}_t)=i\big)\right\|_\mathrm{TV}&<\frac{\epsilon}5
\mbox{ for all $i\in\Omega$}\,,\\
\label{eq-tv-level}\left\|\P\big(\psi(S_t)\in\cdot\big) - \P\big(\psi(\tilde{S}_t) \in \cdot \big)\right\|_\mathrm{TV}&<\frac{\epsilon}5\,,
\end{align}
then we would get that $\|\P(S_{t} \in \cdot) - \pi \|_\mathrm{TV} < \epsilon$ (with room to spare).

Examine the period spent by $(S_t)$ in levels $\{h+2,\ldots,2h+2\}$. The graph in these levels is essentially a product of a $4$-ary tree whose edges are stretched into $L$-long $2$-paths and the expander $H_1$.
Let $\phi:G \to H_1$ map the vertices in these levels to their corresponding vertices in $H_1$, and let $\tau_0,\tau_1$ be the hitting times of $(S_t)$ to levels $\frac32 h$ and $2h+2$ respectively.

As argued above, the \SRW\ started at level $2h+2-k$ is a one-dimensional biased random walk that w.h.p.\ passes through $(1+o(1))\frac53 k+o(h)$ stretched edges until reaching level $2h+2$ for the first time. In particular, between times $\tau_0,\tau_1$ the walk w.h.p. passes through $(1+o(1))\frac56 h$ stretched edges.

Along each stretched edge among levels $\{h+2,\ldots,2h+2\}$, the walk traverses a cross-edge in $H_1$ (that is, $\phi(S_{t+1})$ is uniformly distributed over the neighbors of $\phi(S_t)$ in $H_1$) with probability $\frac35$ whenever it is in an interior vertex in the $2$-path, for a total expected number of $\tfrac32 (L^2 - L)$ such moves.

Finally, due to its bias towards the leaves, with high probability the \SRW\ from level $\frac32h$ reaches level $2h+2$ (the vertices $B$) before hitting level $h+2$.
Applying CLT we conclude that the \SRW\ w.h.p.\ traverses
\[ (\tfrac54 + o(1))L(L-1) h > L^2 h\]
cross-edges (each corresponding to a single step of the \SRW\ on $H_1$) between times $\tau_0,\tau_1$, where the last inequality holds for $L>5$ and large enough $h$. This amounts to at least $L^2 h$ consecutive steps of a \SRW\ along $H_1$.

Aiming for a bound on the total-variation mixing, we may clearly condition on events that occur with high probability:
Condition therefore throughout the proof that indeed the above statement holds.

In particular, letting $(X_t)$ be the \SRW\ on the expander $H_1$ and recalling that $|A| = |H_1| = 20\cdot 2^{2h}$ and $r = L^2 h$
it follows that
\begin{align*}
\max_{s_0}\left\| \P_{s_0}\left(\phi(S_{\tau_1}) \in \cdot\right) - |A|^{-1} \right\|_\mathrm{TV}
&\leq \max_{x_0}\left\| \P_{x_0}\left(X_r \in \cdot \right) - |A|^{-1} \right\|_\mathrm{TV}  \\
&\leq \frac12 \max_{x_0}\left\| \P_{x_0}\left(X_r \in \cdot \right) - |A|^{-1} \right\|_{2} \,.
\end{align*}
Recalling that $H_1$ is a $3$-regular with second largest (in absolute value) eigenvalue $\lambda(H_1)$ and writing $\gamma \deq 1-\frac{\lambda(H_1)}3$,
\begin{align*}
  \max_{x_0}\left\| \P_{x_0}\left(X_r \in \cdot \right) - |A|^{-1} \right\|_{2}
&\leq \sqrt{|A|} \exp\left(-\gamma r \right) \\
&< 5\exp\left[-(\gamma L^2-\log 2)h\right] < |A|^{-2}\,,
\end{align*}
with the last inequality justified for any sufficiently large $h$ provided that
\begin{equation}
  \label{eq-L-cond1}
  L > \frac{\sqrt{5\log 2}}{\sqrt{1-\lambda(H_1)/3}} \,,
\end{equation}
a fact inferred from the choice of $L$ in \eqref{eq-L-choice}. In this case, for any $s_0$
\[ \P_{s_0}\left(S_{\tau_1} = u \right) = \frac{1+o(1)}{|A|} ~\mbox{ for every $u \in A$}\,.\]
By symmetry we now conclude that for any $i \in \Omega$ and $t \geq \tau_1$ we have
\[ \P_{s_0}\left(S_t \in \cdot \mid \psi(S_t) = i\right) = \frac{1+o(1)}{|\cL_i|} ~\mbox{ for every $i \in \{2h+2,\ldots,3h+2\}$ } \,.\]
This immediately establishes~\eqref{eq-tv-in-level}.

To obtain \eqref{eq-tv-level}, note that $\tilde{S}_t$ for $t = \tau_1$ w.h.p.\ satisfies $\psi(\tilde{S}_0) > \frac52 h$. Conditioned on this event we can apply a monotone-coupling to
successfully couple $(S_t)$ and $(\tilde{S}_t)$ such that $\psi(S_{\tau_\ell})=\psi(\tilde{S}_{\tau_\ell})$, yielding~\eqref{eq-tv-level}.

The concentration of $\tau_\ell$ established in Claim~\ref{clm-tau-ell} carries the above two bounds to time $T$, thus completing the proof.
\end{proof}

Let $\tmix(\epsilon ; x)$ denote the total-variation mixing time from a given starting position $x$. That is, if $(X_t)$ is an ergodic Markov chain on a finite state space $\Omega$ with stationary distribution $\pi$ then
\[ \tmix(\epsilon; x) \deq \min\left\{t : \| \P_x(X_t \in \cdot)- \pi\|_\mathrm{TV} < \epsilon \right\}\,.\]
The above lemma gives an upper bound on this quantity for the \SRW\ started at one of the levels $\{0,\ldots,h+2\}$, which we now claim is asymptotically tight:
\begin{corollary}\label{cor-mixing-top}
Consider the \emph{\SRW} on $G$ started at some vertex $s_0$ on level $l_0\in\{0,\ldots,h+2\}$ and let $\tau_\ell$ be the hitting time of the walk to the leaves. Then for any fixed $0<\epsilon < 1$ we have $\tmix(\epsilon; s_0) = (1+o(1))\E_{s_0}\tau_\ell$.
\end{corollary}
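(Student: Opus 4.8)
The upper bound $\tmix(\epsilon;s_0)\le(1+o(1))\E_{s_0}\tau_\ell$ is already contained in Lemma~\ref{lem-tmix-upper-bound-from-top}: taking $\delta\to 0$ after $h\to\infty$, for any fixed $\epsilon>0$ and any $\delta>0$ we have $\|\P_{s_0}(S_{T}\in\cdot)-\pi\|_\mathrm{TV}<\epsilon$ at $T=(1+\delta)\E_{s_0}\tau_\ell$ once $h$ is large, hence $\tmix(\epsilon;s_0)\le(1+\delta)\E_{s_0}\tau_\ell$ for all large $h$. So the whole content of the corollary is the matching lower bound $\tmix(\epsilon;s_0)\ge(1-o(1))\E_{s_0}\tau_\ell$, and the plan is to exhibit, at any time $t=(1-\delta)\E_{s_0}\tau_\ell$, a distinguishing statistic whose distribution under $\P_{s_0}(S_t\in\cdot)$ is far in total variation from its distribution under $\pi$.

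The natural statistic is the level $\psi(S_t)$. First I would record that, by Claim~\ref{clm-tau-ell} and the fact that $L$ is a fixed constant, $\E_{s_0}\tau_\ell\asymp h$ and $\tau_\ell$ is concentrated: $\tau_\ell=(1+o(1))\E_{s_0}\tau_\ell$ with probability $1-o(1)$ (the walk's height is a biased one-dimensional walk, delayed by the i.i.d.-like passage times through the $L$-long $2$-paths and the geometric laziness from the cross-edges, so standard CLT/large-deviation bounds for sums of i.i.d.\ bounded-variance increments apply along the $\Theta(h)$ stretched edges traversed). Consequently, for $t=(1-\delta)\E_{s_0}\tau_\ell$ we have $\tau_\ell>t$ w.h.p., which means $S_t$ has not yet reached level $3h+2$; in fact the same concentration, applied to the partial progress of the height process, shows $\psi(S_t)\le 3h+2-c(\delta)h$ for some $c(\delta)>0$ with probability $1-o(1)$. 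On the other hand, under $\pi$ (uniform on $V(G)$) the overwhelming majority of vertices lie in the last few levels: the number of vertices in levels $\ge 3h+2-c(\delta)h$ is a $(1-o(1))$-fraction of $|V(G)|$, since the vertex counts grow geometrically down the (stretched) $4$-ary tree and the bulk $H_2$ plus the last $h$ levels dominate. Hence $\pi(\psi\ge 3h+2-c(\delta)h)=1-o(1)$ while $\P_{s_0}(\psi(S_t)\ge 3h+2-c(\delta)h)=o(1)$, giving $d(t)=1-o(1)$ and therefore $\tmix(\epsilon;s_0)\ge(1-\delta)\E_{s_0}\tau_\ell$ for every fixed $\epsilon<1$ and every $\delta>0$, once $h$ is large.

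Combining the two bounds and letting $\delta\to 0$ yields $\tmix(\epsilon;s_0)=(1+o(1))\E_{s_0}\tau_\ell$ for every fixed $0<\epsilon<1$, as claimed.

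\textbf{Main obstacle.} The delicate point is the lower tail of the height process: one must be sure that at time $t=(1-\delta)\E_{s_0}\tau_\ell$ the walk is still genuinely bounded away from the leaves, i.e.\ that the \emph{delays} (laziness on interior vertices of $2$-paths, passage through stretched edges) cannot conspire to let the walk race ahead. This is handled by viewing the height as a time-changed biased walk whose per-stretched-edge time increments have mean $\tfrac12 L(5L-3)$ (as computed before Claim~\ref{clm-tau-ell}) and bounded variance, so that a one-sided Chernoff/Azuma estimate on the number of stretched edges crossed in $t$ steps gives the desired $\psi(S_t)\le 3h+2-c(\delta)h$ with exponentially small failure probability; the counting of $\pi$-mass in the bottom levels is then routine given the geometric growth of the level sizes and the sizes $|H_1|\asymp 2^{2h}$, $|H_2|\asymp 2^{6h}$ fixed in the construction.
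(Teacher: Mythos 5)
Your proposal is correct and takes essentially the same route as the paper: the upper bound is quoted from Lemma~\ref{lem-tmix-upper-bound-from-top}, and the lower bound uses the level of the walk as the distinguishing statistic, combining concentration of the hitting time (the arguments behind Claim~\ref{clm-tau-ell}) with the fact that the stationary measure concentrates in the bottom levels. The only cosmetic difference is that the paper cuts at a fixed number $K=O(\log(L/\epsilon))$ of bottom levels carrying a $(1-\epsilon)$-fraction of the mass, whereas you cut at the last $c(\delta)h$ levels carrying a $(1-o(1))$-fraction; both give the same conclusion.
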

\begin{proof}
The upper bound on $\tmix(\epsilon;s_0)$ was established in Lemma~\ref{lem-tmix-upper-bound-from-top}.

For a matching lower bound on $\tmix(1-\epsilon; s_0)$ choose some fixed integer $K = O(\log(L/\epsilon))$ such that the bottom $K$ levels of the graph comprise at least a $(1-\epsilon)$-fraction of the vertices of $G$, i.e.\
 \[ \sum_{i > 3h+2-K} \left|\cL_i\right| > (1-\epsilon)|G|\,.\]
 The lower bound now follows from observing that, by the same arguments that established Claim~\ref{clm-tau-ell}, the hitting time from level $l_0$ to level $3h+2-K$ is w.h.p.\ $(1-o(1))\E\tau_\ell$
for any sufficiently large $h$.
\end{proof}
Having established the asymptotic mixing time of the \SRW\ started at the top $h$ levels, we next wish to show that from all other vertices the mixing time is faster.
\begin{claim}\label{clm-mix-bottom}
  Let $(S_t)$ be the \emph{\SRW} started at some vertex $x$ in level $\psi(x)>h$.
For every $0 < \epsilon < 1$ and any sufficiently large $h$ we have $\tmix(\epsilon;x) < 6L^2 h$.
\end{claim}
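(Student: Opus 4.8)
The plan is to handle $\psi(x)>h$ with a two-stage argument mirroring, ``from below,'' the proof of Lemma~\ref{lem-tmix-upper-bound-from-top}: first the walk falls to the leaves, then it equilibrates among the bottom $h$ levels, which form a forest of $4$-ary trees with leaves interconnected through $H_2$ --- the exact unstretched, cross-edge-free analogue of the product of $H_1$ with a tree used there. I would budget $\tfrac{13}3L^2h$ steps for the first stage and a further $L^2h$ for the second, keeping the total below $6L^2h$.

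\emph{Falling to the leaves.} With $\alpha:=\psi(x)/h>1$, Claim~\ref{clm-tau-ell} gives that the hitting time $\tau_\ell$ of level $3h+2$ equals, for $1<\alpha\le2$, $(\tfrac53+o(1))\bigl[L(5L-3)(1-\tfrac\alpha2)+1\bigr]h$; this is decreasing in $\alpha$, hence at most $\bigl(\tfrac56L(5L-3)+\tfrac53+o(1)\bigr)h<\tfrac{13}3L^2h$, and for $\alpha\ge2$ it is $o(L^2h)$. Thus w.h.p.\ $\tau_\ell<\tfrac{13}3L^2h$ uniformly over $\psi(x)>h$; I would run the walk to time $\tau_\ell$, landing it on a leaf.

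\emph{Equilibrating at the bottom.} From a leaf, leaving the bottom $h$ levels $\{2h+3,\dots,3h+2\}$ forces the height --- a one-dimensional walk with a uniform downward drift toward the leaves --- to climb $h$ levels, which a routine large-deviation estimate makes exponentially unlikely in $h$, hence negligible over the next $L^2h$ steps. Conditioning on the walk staying there, I would rerun the argument of Lemma~\ref{lem-tmix-upper-bound-from-top} with $H_2$ in the role of $H_1$: a constant fraction of the next $L^2h$ steps are moves along $H_2$, so the walk performs $\Theta(L^2h)$ of them, which by the choice $L\ge16/\gap_2$ in~\eqref{eq-L-choice} far exceeds $\gap_2^{-1}\log(|H_2|^2)$; hence its $H_2$-coordinate becomes $(1+o(1))$-uniform, a uniform leaf-position pushes along the $4$-ary tree structure to a uniform position on each level $\cL_i$, $i\in\{2h+3,\dots,3h+2\}$, and a monotone coupling of the height against a stationary copy matches the law of the level. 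As those levels carry a $1-o(1)$ fraction of $V(G)$, this yields $\|\P_x(S_t\in\cdot)-\pi\|_\mathrm{TV}<\epsilon$ at $t=\tfrac{13}3L^2h+L^2h<6L^2h$, i.e.\ $\tmix(\epsilon;x)<6L^2h$.

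The crux is the quantitative bookkeeping in the second stage: equilibration inside the bottom $h$ levels must fit within the $6-\tfrac{13}3=\tfrac53$ (times $L^2h$) budget left by the fall, and this is precisely what the term $16/\gap_2$ (with $32$ as a safeguard) in~\eqref{eq-L-choice} is tuned to ensure --- it forces $\gap_2^{-1}\log(|H_2|^2)=O(Lh)$, negligible against the $\Theta(L^2h)$ moves along $H_2$. Equivalently, by Lemma~\ref{lem-exp} and Cheeger's inequality the relaxation time of the \SRW\ on the bottom-$h$-levels expander $\tilde G$ is $O(\kappa^{-2})\le 450L^2/1024<L^2/2$ (since $\iso(H_2)\ge2\gap_2\ge32/L$ forces $\kappa=(\iso(H_2)\wedge1)/3\ge32/(3L)$), and against $\pi_{\min}(\tilde G)=4^{-(3+o(1))h}$ this keeps the mixing time inside $\tilde G$ below $L^2h$ --- once one also checks, using that the bipartiteness ratio of $\tilde G$ is $\Omega(\gap_2)$ (inherited from $H_2$ at the leaves), that the least eigenvalue of $\tilde G$ stays $\Omega(\kappa^2)$ above $-1$. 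The remaining pieces --- the large-deviation confinement of the walk to the bottom levels, and the $o(1)$ gap between $\pi$ and its restriction to those levels --- are routine.
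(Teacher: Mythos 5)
Your first stage is fine and matches the paper, which simply takes the worst case $\alpha=1$ in Claim~\ref{clm-tau-ell} to get $\tau_\ell< 5L^2h=:t_1$ w.h.p.; so is the confinement-by-drift observation. The genuine problem is your primary second-stage mechanism: the bottom $h$ levels are \emph{not} a product of $H_2$ with a $4$-ary tree. In the construction only the middle band (levels $h+3,\ldots,2h+2$) carries cross-edges realizing a product structure, with $H_1$; levels $2h+3,\ldots,3h+2$ are plain, unstretched $4$-ary trees whose \emph{leaf level only} is interconnected through $H_2$. Hence the walk makes $H_2$-moves only while sitting on the last level, and between two such moves a tree excursion returns it to a (generally different) leaf of the same subtree without using any $H_2$-edge; the leaf-position observed at $H_2$-move times is therefore not the \SRW\ on $H_2$, so the Lemma~\ref{lem-tmix-upper-bound-from-top}-style inference ``$\Theta(L^2h)$ moves along $H_2$ $\Rightarrow$ the $H_2$-coordinate is $(1+o(1))$-uniform, and uniformity pushes along the tree structure to each level $\cL_i$'' does not follow as stated --- there is no independent tree coordinate here to push along. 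As written, that argument would need to be replaced, not just tightened.

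Your ``equivalently'' fallback is not in fact equivalent to the above, but it is essentially the paper's proof: after $\tau_\ell$ the paper works with the induced subgraph on levels $2h+2,\ldots,3h+2$, adds a perfect matching on level $2h+2$ to obtain a $5$-regular graph $G''$ with $\iso(G'')\geq\kappa$ (Lemma~\ref{lem-exp}), converts this via Cheeger's inequality into a spectral gap $\gamma$ with $9/(4\gamma)\leq L^2$ thanks to the choice \eqref{eq-L-choice}, applies the $\sqrt{|G''|}\,e^{-\gamma t}$ bound to mix within $t_2\leq L^2h$, and uses the downward drift to couple the $G$-walk with the $G''$-walk (no return to level $2h+2$). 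Your bookkeeping ($\iso(H_2)\geq 2\gap_2\geq 32/L$, hence $\kappa\geq 32/(3L)$, relaxation time at most $450L^2/1024$, mixing in roughly $0.92\,L^2h$) is consistent with this, and $\tfrac{13}{3}L^2h+L^2h$ is comfortably under $6L^2h$. Two caveats: you must regularize the bottom graph (or adapt the spectral bound to the non-regular $\tilde G$) before invoking the regular-graph form of Cheeger's inequality, and your remark about controlling the least eigenvalue is a legitimate point that the paper itself glosses over by treating $\lambda(G'')$ as the largest nontrivial eigenvalue in absolute value. In short: discard the product-with-$H_2$ narrative and promote the Cheeger-based argument to the main proof; then you have precisely the paper's argument.
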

\begin{proof}
Let $G'$ denote the induced subgraph on the bottom $h+1$ levels of the graph (i.e., levels $2h+2,\ldots,3h+2$).
Since $|G'| = (1-o(1))|G|$ it clearly suffices to show that $(S_t)$ mixes within total-variation distance $\epsilon$ on $G'$.

By Claim~\ref{clm-tau-ell} (taking the worst case $\alpha=1$ corresponding to $\psi(x)=h$) with high probability we have
\[\tau_\ell \leq (\tfrac56+o(1))[L(5L-3)+2]h < 5L^2 h \deq t_1\,,\]
where the above strict inequality holds for any sufficiently large $h$ (as $L\geq 1$).

Recall that $\iso(G') \geq \kappa = (\iso(H_2)\wedge 1)/3$ by Lemma~\ref{lem-exp}, where $H_2$
is the explicit $4$-regular expander with second largest (in absolute value) eigenvalue $\lambda(H_2)$.
Further consider the graph $G''$ obtained by adding to $G'$ a perfect matching on level $2h+2$, thus making it $5$-regular. Clearly, adding edges can only increase the Cheeger constant and so $\iso(G'') \geq \kappa$ as well. Moreover, the discrete form of Cheeger's inequality (\cites{Alon,AM,Dodziuk,JS}), which for a $d$-regular expander $H$ with second largest (in absolute value) eigenvalue $\lambda$ states that
\begin{equation*}
\frac{d-\lambda}2 \leq \iso(H) \leq \sqrt{2d(d-\lambda)}\,,
\end{equation*}
here gives the following:
\begin{align*}
\frac{4-\lambda(H_2)}6 \;\wedge\; \frac13\leq \kappa \leq \iso(G'') \leq \sqrt{10(5-\lambda(G''))}\,.
\end{align*}
In particular we obtain that $\gamma \deq 1-\frac{\lambda(G'')}5$ satisfies
\begin{equation}
  \label{eq-gamma-eq}
  \gamma > \bigg(\frac{4-(\lambda(H_2)\;\vee\; 2)}{6\sqrt{50}}\bigg)^2
\end{equation}
while a simple random walk $(X_t)$ on $G''$ is well-known to satisfy
\begin{align*}
  \max_{x_0}\left\| \P_{x_0}\left(X_t \in \cdot \right) - |G''|^{-1} \right\|_{2}
&\leq \sqrt{|G''|} \exp\left(-\gamma t \right)\,.
\end{align*}
As $|G''| \asymp 2^{6h}$ we infer that after
\[ \frac{3\log 2}{\gamma} h + \frac{O(\log(1/\epsilon))}{\gamma} < \frac9{4\gamma} h \deq t_2\]
(the strict inequality holding for large enough $h$) steps we have
\begin{align*}
\max_{x_0}\left\| \P_{x_0}\left(X_{t_2} \in \cdot \right) - |G''|^{-1} \right\|_\mathrm{TV}
\leq \epsilon / 2\,.
\end{align*}
Recall that our choice of $t_1$ is such that $\tau_\ell < t_1$ w.h.p., i.e.\ the \SRW\ reaches level $3h+2$ by that time, and thereafter (due to its bias towards the leaves) it does not revisit level $2h+2$
until time $t_1+t_2$ except with a probability that is exponentially small in $h$. Since $G$ and $G''$ are identical on levels $2h+3,\ldots,3h+2$ we deduce that w.h.p.\ the \SRW\ performs at least $t_2$ consecutive steps in $G''$ following $\tau_\ell$. Altogether, for large enough $h$ we have
$\left\| \P_{x}\left(S_{t_1+t_2} \in \cdot \right) - \pi \right\|_\mathrm{TV} < \epsilon$ and so
$ \tmix(\epsilon;x) < t_1+t_2 $.

Finally, bearing~\eqref{eq-gamma-eq} and the choice of $L$ in ~\eqref{eq-L-choice} in mind,
\begin{equation}
  \label{eq-L-cond2} L \geq \frac{64}{4 - (\lambda(H_2)\;\vee\;2)} > \frac{\sqrt{\frac94}\cdot 6\sqrt{50}}{4 - (\lambda(H_2)\;\vee\;2)}
  \,,
\end{equation}
hence $L^2 > 9 /4\gamma$ and so $t_2 \leq L^2 h$ and $t_1+t_2 \leq 6L^2h$, as required.
\end{proof}

We now claim that the worst-case mixing time within any $0 < \epsilon < 1$ is attained by an initial vertex at distance $o(h)$ from the root.
Fix $0 <\epsilon < 1$, let $x$ be the initial vertex maximizing $\tmix(\epsilon;x)$ and recall that $\psi(x)$ denotes its level in the graph.
The combination of Claim~\ref{clm-tau-ell} and Corollary~\ref{cor-mixing-top} ensures that if $\psi(x) \leq h$ then necessarily $\psi(x) = o(h)$, in which case
\begin{equation}
  \label{eq-worst-near-root} \tmix(\epsilon;x) = \left(\tfrac53+o(1)\right)(5L^2-3L+1)h \deq t^\star\,.
\end{equation}
An immediate consequence of the requirement~\ref{eq-L-cond2} on $L$ is that $L \geq 50$, hence $t^\star > 8L^2 h$ for any sufficiently large $h$.
Therefore, we cannot have $\psi(x) > h$ since by Claim~\ref{clm-mix-bottom} that would imply that $\tmix(\epsilon;x)<6L^2 h$ contradicting the fact that $x$ achieves the worst-case mixing time.

Overall we deduce that for any $0 < \epsilon < 1$ we have \[\tmix(\epsilon) = \max_{x}\tmix(\epsilon;x) = (1+o(1))t^\star\,,\] thus confirming that the \SRW\ on the above constructed family of $5$-regular expanders exhibits total-variation cutoff from a worst starting location.

\bigskip

It remains to describe how our construction can be (relatively easily) modified to be $3$-regular rather than $5$-regular.

The immediate step is to use binary trees instead of $4$-ary trees, after which we are left with the problem of embedding the explicit expanders $H_1$ and $H_2$ without increasing the degree. This will be achieved via the line-graphs of these expanders, hence our explicit expanders will now have slightly different parameters:
\begin{itemize}
  \item $H_1$ : An explicit $3$-regular expander on $2^{h+1}$ vertices.
  \item $H_2$ : An explicit $3$-regular expander on $2^{3h+1}$ vertices.
\end{itemize}
Recall that given a tree rooted at some vertex $u$, denoted by $\cT_u$, its edge-stretched version is obtained by replacing each edge by a $2$-path of length $L$, and the collection of all new interior vertices (due to subdivision of edges) is denoted by $\cT_u^*$.
The modified construction is as follows:
\begin{enumerate}[\quad 1.]
\item Levels 0,1,2: First levels of a binary tree.
\begin{itemize}
  \item Denote by $U=\{u_1,\ldots,u_6\}$ the vertices in level $2$.
\end{itemize}
\item Levels $3,\ldots,h+2$: Stretched binary trees rooted at $U$:
\begin{itemize}
  \item Connect vertices from $\cT_{u_i}^*$ (interior vertices along $2$-paths) to the corresponding (isomorphic) vertices in $\cT^*_{u_{2i}}$, i.e.\ inter-connect the interior vertices via perfect matchings.
  \item Denote by $A$ the $6\cdot2^h$ vertices in level $h+2$.
\end{itemize}
\item Levels $h+3,\ldots,2h+2$: Edge-stretched binary trees rooted at $A$ and inter-connected via the line-graph of $H_1$ using auxiliary vertices:
\begin{itemize}
  \item Associate each binary tree $\cT_{a_i}$ rooted at $A$ to an \emph{edge} of $H_1$.
  \item For each $x \in \cT_{a_i}^*$ (interior vertex on a $2$-path) we connect it to a new auxiliary vertex $x'$ and associate $x'$ with a unique edge of $H_1$.
  \item We say that $x\in\cT_{a_i}^*$ and $y\in\cT_{a_j}^*$ are isomorphic if the isomorphism from $\cT_{a_i}$ to $\cT_{a_j}$ maps $x$ to $y$. Add $|H_1|$ new auxiliary vertices per equivalence class of $|A|$ such isomorphic vertices, identify them with the vertices of $H_1$ and connect every new vertex $v$ to the auxiliary vertices $x',y',z'$ representing the edges incident to it.
\end{itemize}
\item Levels $2h+3,\ldots,3h+2$: A forest of binary trees.
\item Last level: leaves are inter-connected via the line-graph of $H_2$:
\begin{itemize}
  \item Associate the $6\cdot 2^{3h}$ vertices with the edges of $H_2$.
  \item Add $|H_2|$ new auxiliary vertices, each connected to the leaves corresponding to edges that are incident to it in $H_2$.
\end{itemize}
\end{enumerate}
It is easy to verify that the walk along the cross-edges of the $\cT_{a_i}^*$'s now corresponds to a lazy (unbiased) random walk on the edges of $H_1$. Similarly, the walk along the cross-edges connecting the leaves corresponds to the \SRW\ on the edges of $H_2$. Hence, all of the original arguments remain valid in this modified setting for an appropriately chosen fixed $L$.
This completes the proof of Theorem~\ref{thm-1}. \qed

\subsection{Explicit expanders without total-variation cutoff}\label{sec-nocutoff}

The explicit cubic expanders with cutoff constructed in the previous section (illustrated in Fig.~\ref{fig-expcons}) can be easily modified so that the \SRW\ on them from a worst starting position would \emph{not} exhibit total-variation cutoff.

To do so, recall that in the above-described family of graphs, each vertex of the subset $U$ was the root of a regular tree of height $h$ whose edges were stretched into $L$-long $2$-paths (see Item~\ref{cons-part-1} of the construction).
We now tweak this construction by stretching some of the edges into $2$-paths of length $L'$.
Namely, for subtrees rooted at the \emph{odd} vertices in level $h/2$ of these trees we stretch the edges into paths of length $L' > L$. Under this modified stretching the trees $\cT_{u_i}$ are clearly still isomorphic, hence the cross edges are inter-connecting $2$-paths of the same lengths.

By the arguments above, starting from any level $\ell > h/2$ the mixing is faster compared to the root, and if $L'/L$ is sufficiently small then the root remains the asymptotically worst starting position. However, starting from the root (and in fact, starting from any level $\ell \leq h/2$) the hitting time to the set $A$ is no longer concentrated due to the odd/even choice of subtree at level $h/2$. Therefore, from the worst starting position we have that the hitting time to the leaves is concentrated on two distinct values (differing by a fixed multiplicative constant), each with probability $\frac12-o(1)$. This implies that the ratio $\tmix(\frac14)/ \tmix(\frac34)$ is bounded away from $1$ and in particular this explicit family of expanders does not have total-variation cutoff.

\subsection{Proof of Theorem~\ref{thm-2}: cutoff at any prescribed location order}\label{sec-prescribed}

Suppose $H$ is an explicit $3$-regular expander on $m$ vertices provided by Theorem~\ref{thm-1}, and recall that the \SRW\ on this graph exhibits cutoff at $C \log m$ where $C>0$ is some absolute constant. Our graph $G$ will be the result of replacing every edge of $H$ by the $3$-regular analogue of a $2$-path, which we refer to as a ``cylinder'',
illustrated in Fig.~\ref{fig-expconsgen}. The length of each cylinder is set to be $L=L(m)$ satisfying $L\equiv 1\pmod{4}$. Notice that the total number of vertices in $G$ is
\begin{equation}\label{e:numberOfVertices}
n = |V(H)|+|E(H)|\tfrac32(L-1) = \left(1 + \tfrac94(L-1)\right)m\,.
\end{equation}
Since $m\to\infty$ and the \SRW\ on $H$, started at a worst-case starting position, traverses $(C+o(1))\log m$ edges until mixing, we infer from CLT, as well as the fact that the expected passage-time through an $L$-long cylinder is $L^2$, that the analogous random walk on $G$ has cutoff at
\begin{equation}\label{e:mixStrechted}
\tmix = C L^2 \log m = (C+o(1))L^2 \log(n/L)\,.
\end{equation}
When $L(m)=O(1)$ we have $\tmix \asymp \log n$. On the other extreme end, when $L$ grows arbitrarily fast as a function of $m$ we obtain that it approaches $n$ arbitrarily closely but we must still having $L = o(n)$ since $n/L \asymp m \to \infty$. In that case $\tmix$ approaches $n^2$ arbitrarily closely while having a strictly smaller order.

To complete the construction it remains to observe that we may choose $L$ and $m$ so that $|G_n|\asymp n$ and $\tmix \asymp t_n$.  This can be achieved by first selecting $L$ so that $t_n \asymp (C+o(1))L^2 \log(n/L)$ and then selecting a graph constructed through Theorem~\ref{thm-1} on $m$ vertices for some $m\asymp n/L$ (note that in the theorem we construct graphs of size essentially $(c +o(1)) 2^{3h}$ so this is always possible).

\begin{figure}
\centering \includegraphics[width=4.5in]{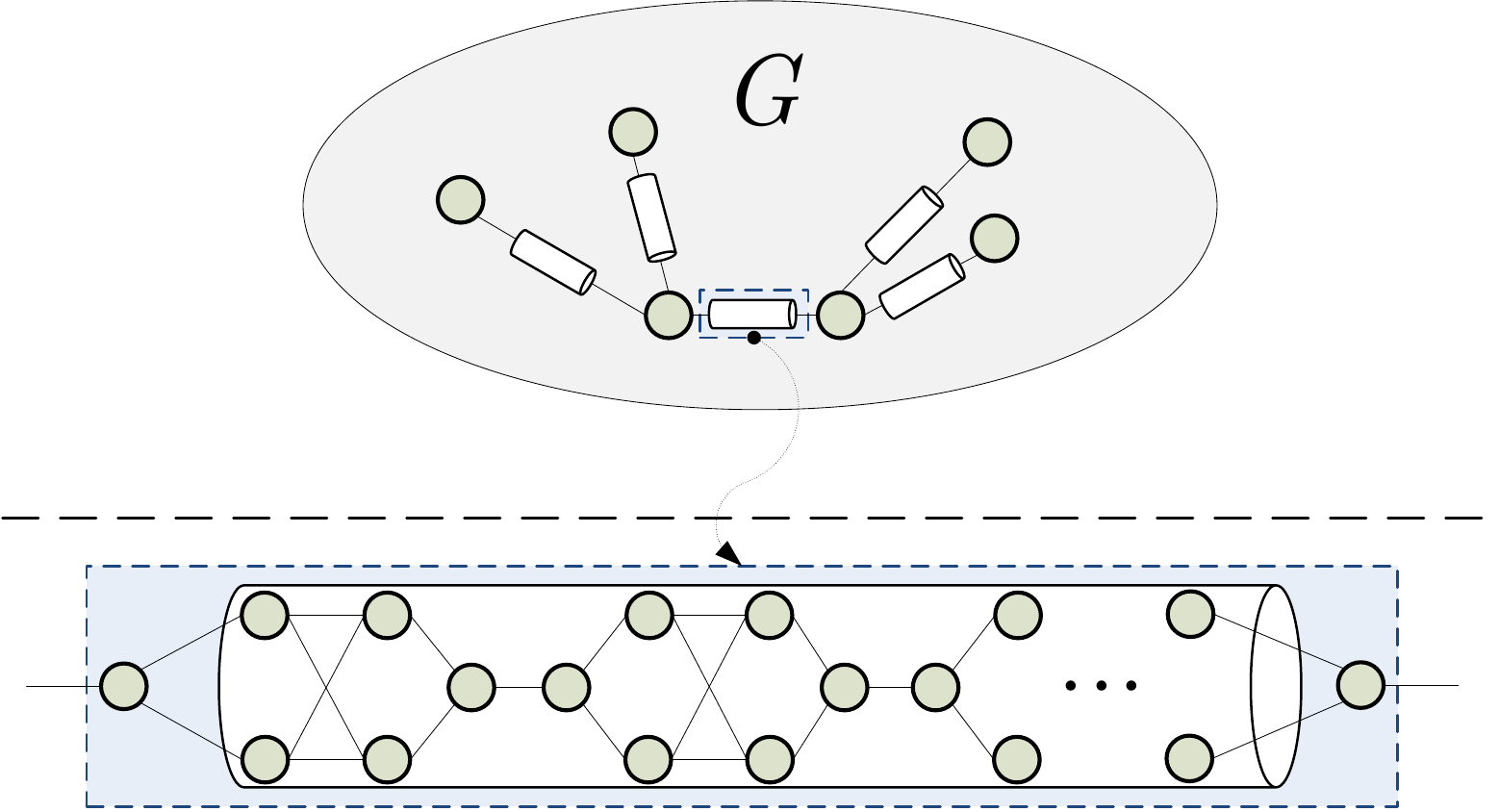}
\caption{Extension of the explicit expanders with cutoff to graphs with prescribed order of cutoff location.}
\label{fig-expconsgen}
\end{figure}

To show that there is no cutoff whenever $\tmix\asymp n^2$ we will argue that in that case we have $\gap = O(n^{-2})$, in contrast to the necessary condition for cutoff $\gap^{-1}=o(\tmix)$ due to Peres (cf.~\cite{Peres}), discussed in the introduction.
\begin{lemma}\label{lem-trel-no-cutoff}
   Let $G$ be a graph on $n$ vertices with degrees bounded by some $\Delta$ fixed on which the \emph{\SRW} has $\tmix \asymp n^2$. Then the spectral-gap of the walk satisfies $\gap\asymp n^{-2}$. In particular, the \emph{\SRW} on $G$ does not exhibit cutoff.
\end{lemma}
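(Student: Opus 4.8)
The plan is to establish $\gap \asymp n^{-2}$ via matching upper and lower bounds, and then invoke Peres' criterion. The upper bound $\trel \asymp n^2$ (equivalently $\gap = O(n^{-2})$) is the substantive direction, since it is the one that contradicts cutoff. The standard route is through the relationship $\trel \leq t_{\textsc{mix}} \cdot C$ for lazy (or bounded-spectrum) chains — more precisely, for a reversible chain one has $\trel - 1 \leq t_{\textsc{mix}}(\tfrac14)\,/\log 2$ (see e.g.~\cite{LevinPeresWilmer}), or after one step of laziness an even cleaner bound. Since $t_{\textsc{mix}} \asymp n^2$ by hypothesis, this immediately gives $\trel = O(n^2)$, i.e.\ $\gap = \Omega(n^{-2})$... but wait — that is the \emph{wrong} direction. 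So the argument must instead go: a bounded-degree graph on $n$ vertices always has $\trel = O(n^2)$ (this is the easy, always-true bound), and the content is to show $\trel = \Omega(n^2)$, i.e.\ $\gap = O(n^{-2})$, precisely when $t_{\textsc{mix}} \asymp n^2$.

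\textbf{Lower bound on $\trel$ (the main step).} I would argue that $t_{\textsc{mix}} \asymp n^2$ forces the graph to contain a long, thin ``bottleneck'' which in turn forces $\gap$ to be small. The key tool is the inequality $\trel \geq (1-o(1))\, t_{\textsc{mix}}(\epsilon) / \log(1/\epsilon)$ — no: reversed again. The correct chain of implications is: for a reversible chain, $t_{\textsc{mix}}(\epsilon) \leq \trel \log\!\big(1/(\epsilon\,\pi_{\min})\big)$. Since $\pi_{\min} \geq 1/(\Delta n)$ for a bounded-degree graph, this reads $t_{\textsc{mix}}(\epsilon) \leq \trel\,(\log n + O(1))$. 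Hence $n^2 \asymp t_{\textsc{mix}} \leq \trel \cdot O(\log n)$, giving $\trel = \Omega(n^2/\log n)$. This is \emph{almost} what we want but loses a log factor. To close the gap, I would upgrade the bound using the diameter: for a bounded-degree graph $t_{\textsc{mix}} \asymp n^2$ implies (via e.g.\ the evolving-set or the commute-time estimate $\mathbb{E}_u \tau_v + \mathbb{E}_v \tau_u = 2|E|\, R_{\mathrm{eff}}(u,v)$) that the effective resistance, hence the diameter, is $\Omega(n)$; then the standard diameter lower bound $\gap^{-1} \geq c\,\mathrm{diam}(G)^2 / \log(1/\pi_{\min})$ — once more a log loss. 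The cleanest fix is to bypass the log entirely: $t_{\textsc{mix}}(\tfrac14) \asymp n^2$ on a bounded-degree graph gives, by a direct conductance/Cheeger argument, a set $S$ with $\Phi(S) = |\partial S|/(\Delta|S|) = O(n^{-2})$, whence by the easy side of Cheeger $\gap \leq 2\Phi_* = O(n^{-2})$. I would obtain such an $S$ from the fact that the worst-case hitting time is $\Omega(n^2)$ (which follows from $t_{\textsc{mix}} = \Omega(n^2)$), and a walk taking $\Omega(n^2)$ steps to hit a target vertex must cross a cut of conductance $O(n^{-2})$ — formalized via the expected number of crossings, $\mathbb{E}[\text{crossings of } \partial S] \geq c\, |E|\, R_{\mathrm{eff}} / |\partial S|$ being forced to be large.

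\textbf{Upper bound on $\trel$ and conclusion.} For any connected bounded-degree graph on $n$ vertices the Cheeger constant is at least $1/(\Delta n)$ (any set and its complement are joined by at least one edge), and more carefully $\Phi_* \geq c/n$... but again this gives $\gap \geq \Omega(n^{-2})$ only after squaring via the hard side of Cheeger, which is exactly right: $\gap \geq \tfrac12 \Phi_*^2 \geq c/n^2$. So $\gap = \Omega(n^{-2})$ holds \emph{for every} bounded-degree connected graph, and combined with the $t_{\textsc{mix}} \asymp n^2 \Rightarrow \gap = O(n^{-2})$ direction above, we conclude $\gap \asymp n^{-2}$. Finally, cutoff requires $\gap^{-1} = o(t_{\textsc{mix}})$ by Peres~\cite{Peres}; here $\gap^{-1} \asymp n^2 \asymp t_{\textsc{mix}}$, so the necessary condition fails and there is no cutoff.

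\textbf{Main obstacle.} The delicate point is extracting a cut of conductance $O(n^{-2})$ — rather than the easier $O(n^{-2}\log n)$ — directly from the hypothesis $t_{\textsc{mix}} \asymp n^2$. I expect to handle this by working with expected edge-crossing counts (equivalently effective resistance / commute times) instead of the lossy $t_{\textsc{mix}} \leq \trel \log(1/\pi_{\min})$ bound, using that on a bounded-degree graph $t_{\textsc{mix}}$, the maximal commute time, and $\max_{u,v} R_{\mathrm{eff}}(u,v)$ are all comparable up to absolute constants.
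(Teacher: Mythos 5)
Your final conclusion ($\gap\asymp n^{-2}$ via Peres' criterion) and your $\gap=\Omega(n^{-2})$ direction (one boundary edge per cut gives $\iso(G)\geq c/n$, then the hard side of Cheeger) are fine, but the main step of your plan --- extracting a set $S$ with conductance $O(n^{-2})$ from $\tmix\asymp n^2$ and concluding $\gap\leq 2\Phi_*=O(n^{-2})$ --- rests on a false claim. The cycle $\Z_n$ (or a path) satisfies all hypotheses of the lemma: bounded degree and $\tmix\asymp n^2$; yet its minimum conductance is of order $1/n$, not $n^{-2}$ --- every cut of a half-cycle has two boundary edges and volume of order $n$. There simply is no cut of conductance $O(n^{-2})$, and likewise no ``crossing/effective-resistance'' argument can manufacture one: the gap of the cycle is $\asymp n^{-2}$ only because Cheeger's inequality is quadratically lossy, so the easy side $\gap\leq 2\Phi_*$ can never give a bound better than $O(1/n)$ in this situation. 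So the step you flagged as the ``main obstacle'' is not merely delicate; as formulated it fails.

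The route you dismissed because of a perceived log loss is in fact the one that works, and it is what the paper does. From $\tmix = O(\diam(G)\vol(G))$ (Aldous--Fill) and $\vol(G)\leq\Delta n$, the hypothesis $\tmix\asymp n^2$ forces $N\deq\diam(G)\geq cn$. Now take $x,y$ with $\dist_G(x,y)=N$ and plug the test function $f(v)=\dist_G(x,v)$ into the Dirichlet form $\gap=\inf_f\mathcal{E}(f)/\var(f)$: since $f$ is $1$-Lipschitz along edges, $\mathcal{E}(f)\leq 1$; and because a geodesic guarantees at least one vertex at every distance $0,\ldots,N$, the level sets $f^{-1}(\{0,\ldots,\lfloor N/4\rfloor\})$ and $f^{-1}(\{\lceil 3N/4\rceil,\ldots,N\})$ each contain $\Omega(N)=\Omega(n)$ vertices, hence each carries constant $\pi$-mass, giving $\var(f)\geq c'n^2$. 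Thus $\gap=O(n^{-2})$ with no logarithmic loss --- the point you missed is that when $\diam\asymp n$ the two extreme level sets automatically have constant stationary measure, so no appeal to $\log(1/\pi_{\min})$ (and no conductance argument) is needed.
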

\begin{proof}
Observe that the above graph must satisfy $\diam(G) \geq c n$ for some fixed $c > 0$ as it is well-known (cf., e.g., \cite{AF}) that the lazy walk on any graph $H$ has $\tmix = O(\diam(H) \vol(H))$ and in our case $\vol(G) \leq \Delta n = O(n)$.
%

Let $x,y\in V(G)$ be two vertices whose distance in $G$ is
\[N \deq \dist_G(x,y) = \diam(G) \geq c n\,.\]
Our lower bound on the gap will be derived from its representation via the Dirichlet form,
according to which
\begin{align}\label{eq-dirichlet-form}
\gap = \inf_f \frac{\mathcal{E}(f)}
{\var(f)} = \inf_f\frac{\frac{1}{2}\sum_{x,y\in\Omega}\left[f(x)-f(y)\right]^2\pi(x)P(x,y)}{\var_\pi f}\,,
\end{align}
where $\pi$ is the (uniform) stationary measure and $P$ is the transition kernel of the \SRW.
As a test-function $f:V(G)\to\R$ in the above form choose
\[ f(v) \deq \dist_G(x,v)\,.\]
Clearly we have $\mathcal{E}(f) \leq 1$ and a lower bound of order $n^2$ on the variance follows from the fact that two sets of linear size each have a linear discrepancy according to $f$. Namely,
\[ \pi\left(f^{-1}(\{0,\ldots,\lfloor N/4\rfloor\})\right) \geq \frac{c}4~,~\pi\left(f^{-1}(\{\lceil3N/4\rceil,\ldots,N\})\right) \geq \frac{c}4\,,\]
as a result of which
\[ \var(f) \geq (c/4)(N/4) ^2 > c' n^2\,.\]
We conclude that $\gap = O(n^{-2})$, thus completing the proofs of Lemma~\ref{lem-trel-no-cutoff} and Theorem~\ref{thm-2}.
\end{proof}

\section{Concluding remarks and open problems}
\begin{list}{\labelitemi}{\leftmargin=2em}
\item Recent results in~\cite{LS} showed that almost every regular expander graph has total-variation cutoff (prior to that there were no known examples for bounded-degree graphs with this phenomenon); here we provided a \emph{first explicit} construction for bounded-degree expanders with cutoff.

\item The expanders constructed in this work are non-transitive. Moreover, our proof exploits their highly asymmetric structure in order to control the mixing time of the random walk from various starting locations.
    It would be interesting to obtain an explicit construction of transitive expanders with total-variation cutoff.

\item A slight variant of our construction gives an example of a family of expanders where the \SRW\ does \emph{not} exhibit cutoff, thereby disagreeing with Peres' cutoff-criterion. Both here and in another such example due to Peres and Wilson~\cite{PW} the expanders are non-transitive (hence the restriction to transitive graphs in Peres' conjecture stated next).

\item While it is conjectured by Peres that the random walk on any family of transitive bounded-degree expanders exhibits total-variation cutoff, there is not even a single example of such a transitive family where cutoff was proved (or disproved).

\item For general (not necessarily expanding) bounded-degree graphs on $n$ vertices it is well-known that $\tmix = O(n^2)$. Here we showed that cutoff can occur essentially anywhere up to $o(n^2)$ by constructing cubic graphs with cutoff at any such prescribed location. Furthermore, this is tight as we prove that if $\tmix \asymp n^2 $ then cutoff cannot occur.

\end{list}

\subsection*{Acknowledgment}
We would like to thank Yuval Peres for suggesting the problem, his encouragement and useful discussions.

\begin{bibdiv}
\begin{biblist}

\bib{Ajtai}{article}{
   author={Ajtai, M.},
   title={Recursive construction for $3$-regular expanders},
   journal={Combinatorica},
   volume={14},
   date={1994},
   number={4},
   pages={379--416},
}

\bib{Aldous}{article}{
  author = {Aldous, David},
  title = {Random walks on finite groups and rapidly mixing {M}arkov chains},
  booktitle = {Seminar on probability, XVII},
  series = {Lecture Notes in Math.},
  volume = {986},
  pages = {243--297},
  publisher = {Springer},
  address = {Berlin},
  year = {1983},
}

\bib{AD}{article}{
  author = {Aldous, David},
  author = {Diaconis, Persi},
  title = {Shuffling cards and stopping times},
  journal = {Amer. Math. Monthly},
  volume = {93},
  pages = {333--348},
  year = { 1986 },
}

\bib{AF}{book}{
    AUTHOR = {Aldous, David},
    AUTHOR = {Fill, James Allen},
    TITLE =  {Reversible {M}arkov Chains and Random Walks on Graphs},
    note = {In preparation, \texttt{http://www.stat.berkeley.edu/\~{}aldous/RWG/book.html}},
}

\bib{Alon}{article}{
   author={Alon, N.},
   title={Eigenvalues and expanders},
   journal={Combinatorica},
   volume={6},
   date={1986},
   number={2},
   pages={83--96},
}

\bib{AM}{article}{
   author={Alon, N.},
   author={Milman, V. D.},
   title={$\lambda\sb 1,$ isoperimetric inequalities for graphs, and superconcentrators},
   journal={J. Combin. Theory Ser. B},
   volume={38},
   date={1985},
   number={1},
   pages={73--88},
}

\bib{Berestycki}{article}{
    author = {Berestycki, Nathanael},
    title = {Phase transitions for the distance of random walks and applications to genome rearrangement},
    year = {2005},
    journal = {Ph.D. dissertation, Cornell University},
}

\bib{CS}{article}{
   author = {Chen, Guan-Yu},
   author = {Saloff-Coste, Laurent},
   title = {The cutoff phenomenon for ergodic Markov processes},
   journal = {Electronic Journal of Probability},
   volume = {13},
   year = {2008},
   pages = {26--78},
}

\bib{Diaconis}{article}{
  author = {Diaconis, Persi},
  title = {The cutoff phenomenon in finite {M}arkov chains},
  journal = {Proc. Nat. Acad. Sci. U.S.A.},
  volume = {93},
  year = {1996},
  number = {4},
  pages = {1659--1664},
}

\bib{DS}{article}{
  author = {Diaconis, Persi},
  author = {Shahshahani, Mehrdad},
  title = {Generating a random permutation with random transpositions},
  journal = {Z. Wahrsch. Verw. Gebiete},
  volume = {57},
  year = {1981},
  number = {2},
  pages = {159--179},
}

\bib{DLP}{article}{
  author = {Ding, Jian},
  author = {Lubetzky, Eyal},
  author = {Peres, Yuval},
  title = {Total-variation cutoff in birth-and-death chains},
  journal = {Probab. Theory Related Fields},
  volume = {146},
  year = {2010},
  number = {1},
  pages = {61--85},
}

\bib{Dodziuk}{article}{
   author={Dodziuk, Jozef},
   title={Difference equations, isoperimetric inequality and transience of
   certain random walks},
   journal={Trans. Amer. Math. Soc.},
   volume={284},
   date={1984},
   number={2},
   pages={787--794},
}

\bib{Durrett}{book}{
  author={Durrett, Rick},
  title={Random graph dynamics},
  series={Cambridge Series in Statistical and Probabilistic Mathematics},
  publisher={Cambridge University Press},
  place={Cambridge},
  date={2007},
  pages={x+212},
}

\bib{HLW}{article}{
  author={Hoory, Shlomo},
  author={Linial, Nathan},
  author={Wigderson, Avi},
  title={Expander graphs and their applications},
  journal={Bull. Amer. Math. Soc.},
  volume={43},
  date={2006},
  number={4},
  pages={439--561},
}

\bib{LS}{article}{
    author = {Lubetzky, Eyal},
    author = {Sly, Allan},
    title = {Cutoff phenomena for random walks on random regular graphs},
    journal = {Duke Math. J.},
    status = {to appear},
}

\bib{Peres}{article}{
  author = {Peres, Yuval},
  conference = {
        title = {American Institute of Mathematics (AIM) research workshop ``Sharp Thresholds for Mixing Times''},
        address = {Palo Alto},
        date = {December 2004},
        },
  note ={ Summary available at \texttt{http://www.aimath.org/WWN/mixingtimes}},
}  

\bib{PW}{article}{
  author = {Peres, Yuval},
  author={Wilson, David Bruce},
  note ={Private communication},
}

\bib{RVW}{article}{
   author={Reingold, Omer},
   author={Vadhan, Salil},
   author={Wigderson, Avi},
   title={Entropy waves, the zig-zag graph product, and new constant-degree
   expanders},
   journal={Ann. of Math. (2)},
   volume={155},
   date={2002},
   number={1},
   pages={157--187},
}

\bib{SaloffCoste}{article}{
  author = {Saloff-Coste, Laurent},
  title = {Random walks on finite groups},
  booktitle = {Probability on discrete structures},
  series = {Encyclopaedia Math. Sci.},
  volume = {110},
  pages = {263--346},
  publisher = {Springer},
  address = {Berlin},
  year = {2004},
}

\bib{JS}{article}{
   author={Sinclair, Alistair},
   author={Jerrum, Mark},
   title={Approximate counting, uniform generation and rapidly mixing Markov chains},
   journal={Inform. and Comput.},
   volume={82},
   date={1989},
   number={1},
   pages={93--133},
}

\end{biblist}
\end{bibdiv}

\end{document}